\numberwithin{equation}{section}
\numberwithin{subsection}{section}
\newenvironment{enumerate1}
{\begin{enumerate}[\upshape (1)]}
{\end{enumerate}}
\newtheorem*{namedtheorem}{\theoremname}
\newcommand{\theoremname}{testing}
\newtheorem{theorem}{Theorem}[section]
\newtheorem{proposition}[theorem]{Proposition}
\newtheorem{proposition-definition}[theorem]
{Proposition-Definition}
\newtheorem{lemma}[theorem]{Lemma}
\theoremstyle{definition}
\newtheorem{example}[theorem]{Example}
\theoremstyle{remark}
\renewcommand{\mathcal}{\mathscr}
 \newcommand\cB{\mathcal{B}}
\newcommand\cM{\mathcal{M}} 
\newcommand\cO{\mathcal{O}}
 \newcommand\PP{\mathbb{P}}
\newcommand\fM{\mathfrak{M}} \newcommand\frm{\mathfrak{m}}
\newcommand\arr{\ifinner\to\else\longrightarrow\fi}
\newcommand\arrto{\ifinner\mapsto\else\longmapsto\fi}
\newcommand{\xarr}{\xrightarrow}
\newcommand\op{^{\mathrm{op}}}
\newcommand{\eqdef}{\mathrel{\smash{\overset{\mathrm{\scriptscriptstyle def}} =}}}
\renewcommand\th{^\text{th}}
\newcommand\nd{^\text{nd}}
\def\displaytimes_#1{\mathrel{\mathop{\times}\limits_{#1}}}
\def\displayotimes_#1{\mathrel{\mathop{\bigotimes}\limits_{#1}}}
\newcommand\spec{\operatorname{Spec}}
\newcommand{\cat}[1]{(\mathrm{#1})}
\newcommand{\underaut}{\mathop{\underline{\mathrm{Aut}}}\nolimits}
\newlength{\ignora}
\renewcommand{\setminus}{\smallsetminus}
\newcommand{\mmu}{\boldsymbol{\mu}}
\newcommand\radice[2][\relax]{\hspace{-1.5pt}\sqrt[\uproot{2}#1]{#2}}
\DeclareFontFamily{U}{mathx}{\hyphenchar\font45}
\DeclareFontShape{U}{mathx}{m}{n}{
      <5> <6> <7> <8> <9> <10>
      <10.95> <12> <14.4> <17.28> <20.74> <24.88>
      mathx10
      }{}
\DeclareSymbolFont{mathx}{U}{mathx}{m}{n}
\DeclareMathAccent{\widecheck}{0}{mathx}{"71}
\DeclareMathAccent{\wideparen}{0}{mathx}{"75}
\renewcommand{\epsilon}{\varepsilon}
\newcommand{\cha}{\operatorname{char}}
\newcommand{\ed}{\operatorname{ed}}
\newcommand{\ged}{\operatorname{g\mspace{1mu}ed}}
\newcommand{\trdeg}{\operatorname{tr\mspace{1mu}deg}}
\newcommand{\s}{\subseteq}
\title[Genericity for tame stacks]{The genericity theorem for the essential dimension of tame stacks}
\author{Giulio Bresciani}
\address[G. Bresciani]{Scuola Normale Superiore, Centro di Ricerca Matematica Ennio de Giorgi, Collegio Puteano, Piazza dei Cavalieri 3, 56126 Pisa, Italy}
\email{giulio.bresciani@sns.it}
\author{Angelo Vistoli}
\address[A. Vistoli]{Scuola Normale Superiore, Piazza dei Cavalieri 7, 56126 Pisa, Italy}
\email{angelo.vistoli@sns.it}
\thanks{The first author was partially supported by the DFG Priority Program "Homotopy Theory and Algebraic Geometry" SPP 1786, the second from research funds from the Scuola Normale Superiore, Project \texttt{SNS19\_B\_VISTOLI}. The paper is based upon work partially supported by the Swedish Research Council under grant no. 2016-06596 while the second author was in residence at Institut Mittag-Leffler in Djursholm}
\dedicatory{Dedicated with admiration to Herb Clemens on the occasion of his $82\nd$ birthday.}
\begin{document}

\begin{abstract}
	Let $X$ be a regular tame stack. If $X$ is locally of finite type over a field, we prove that the essential dimension of $X$ is equal to its generic essential dimension, this generalizes a previous result of P.~Brosnan, Z.~Reichstein and the second author. Now suppose that $X$ is locally of finite type over a $1$-dimensional noetherian local domain $R$ with fraction field $K$ and residue field $k$. We prove that $\operatorname{ed}_{k}X_{k} \le \operatorname{ed}_{K}X_{K}$ if $X\to \operatorname{Spec} R$ is smooth and $\operatorname{ed}_{k}X_{k} \le \operatorname{ed}_{K}X_{K}+1$ in general.
\end{abstract}

\maketitle

\section{Introduction, and the statement of the main theorem}

Let $k$ be a field, $X \arr \spec k$ an algebraic stack, $\ell$ an extension of $k$, $\xi \in X(\ell)$ an object of $X$ over $\ell$. If $k \subseteq L \subseteq \ell$ is an intermediate extension, we say, very naturally, that $L$ is a \emph{field of definition} of $\xi$ if $\xi$ descends to $L$. The \emph{essential dimension} $\ed_{k}\xi$, which is either a natural number or $+\infty$, is the minimal transcendence degree of a field of definition of $\xi$. If $X$ is of finite type then $\ed_{k}\xi$ is always finite.

This number $\ed_{k}\xi$ is a very natural invariant, which measures, essentially, the number of independent parameters that are needed for defining $\xi$. The essential dimension $\ed_{k}X$ of $X$ is the supremum of the essential dimension of all objects over all extensions of $k$ (if $X$ is empty then $\ed_{k}X$ is $-\infty$). This number is the answer to the question ``how many independent parameters are needed to define the most complicated object of $X$?''. For example, this is a very natural question for the stack $\cM_{g}$ of smooth projective curves of genus~$g$.

Essential dimension was introduced for classifying stacks of finite groups in \cite{buhler-reichstein}, with a rather more geometric language. Since then it has been actively investigated by many mathematicians. It has been studied for classifying stacks of positive dimensional algebraic group starting from \cite{reichstein-ed-algebraic-groups}, and for more general classes of geometric and algebraic objects in \cite{brosnan-reichstein-vistoli1}. See \cite{berhuy-favi-functorial,brosnan-reichstein-vistoli3, reichstein-ed-survey, merkurjev-ed-survey} for an overview of the subject.

Suppose that $X$ is an integral algebraic stack which is locally of finite type over $k$. We can define the \emph{generic essential dimension}  $\ged_{k}X$ (see \cite[Definition~3.3]{brosnan-reichstein-vistoli3}) as the supremum of all $\ed_{k}\xi$ taken over all $\xi \in X(\ell)$ such that the associated morphism $\xi\colon \spec \ell \arr X$ is dominant. For example, if $X$ has finite inertia and $X \arr M$ is its moduli space, then $M$ is an integral scheme over $k$; if $k(M)$ is its field of rational functions, the pullback $X_{k(M)} \arr \spec k(M)$ is a gerbe (the \emph{generic gerbe of $X$}), and
   \[
   \ged_{k}X = \ed_{k(M)}X_{k(M)} + \dim M\,.
   \]
So, if a generic object of $X$ has trivial automorphism group, then
   \[
   \ged_{k}X =  \dim M = \dim X\,.
   \]
   
P.~Brosnan, Z.~Reichstein and the second author proved two results linking $\ed_{k}X$ and $\ed_{k(M)}X_{k(M)}$.

\begin{enumerate1}

\item The genericity theorem (see \cite{brosnan-reichstein-vistoli1, brosnan-reichstein-vistoli-mixed-char, brosnan-reichstein-vistoli3, reichstein-vistoli-genericity}) says that if $X$ is a smooth integral tame Deligne--Mumford stack over $k$, then $\ed_{k}X = \ged_{k}X$. 

\item If $R$ is a DVR with quotient field $K$ and residue field $k$, and $X \arr \spec R$ is a finite tame étale gerbe, then $\ed_{k}X_{k} \leq \ed_{K}X_{K}$. This was proved in \cite{brosnan-reichstein-vistoli3, brosnan-reichstein-vistoli-mixed-char} (the proof in the first paper did not work when $R$ had mixed characteristics).

\end{enumerate1}

(1) plays a pivotal role in the calculation of the essential dimension of many stacks of geometric interest, such as stacks of smooth or stable curves, stacks of principally polarized abelian varieties \cite{brosnan-reichstein-vistoli3}, coherent sheaves on smooth curves \cite{biswas-dhillon-hoffmann-ed-coherent}, quiver representations \cite{scavia-quiver-ed} and polarized K3 surfaces \cite{gao-k3}. 

In this paper we extend both these results to \emph{weakly tame stacks}, in a somewhat more general form.

Tame stacks have been defined by D.~Abramovich, M.~Olsson and the second author in \cite{dan-olsson-vistoli1}. They are stacks with finite inertia, whose automorphism group schemes for objects over a field are linearly reductive. In characteristic~$0$ these are the Deligne--Mumford stacks with finite inertia; but in positive characteristic they are not necessarily Deligne--Mumford, as there exist finite group schemes that are linearly reductive but not reduced, such as $\mmu_{p}$ in characteristic~$p$; and étale finite group schemes are linearly reductive if and only if their order is prime to the characteristic.

We define weakly tame stacks as algebraic stacks, in the sense of \cite{laumon-moret-bailly}, whose automorphism group schemes for objects over a field are finite and linearly reductive, but whose inertia is not necessarily finite. 

\begin{theorem}\label{thm:genericity1}
Let $X$ be a regular integral weakly tame stack, which is locally of finite type over a field $k$. Then
   \[
   \ed_{k}X = \ged_{k}X\,.
   \]
\end{theorem}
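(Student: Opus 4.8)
Both $\ed_{k}X$ and $\ged_{k}X$ are suprema of $\ed_{k}\xi$ over objects $\xi$ of $X$ over field extensions of $k$, the class relevant to $\ged_{k}X$ being cut out by the condition that $\xi\colon\spec\ell\arr X$ be dominant; so $\ged_{k}X\le\ed_{k}X$ trivially, and the whole content is the opposite inequality. The plan is to rephrase that inequality purely in terms of residual gerbes, and then to prove the resulting local statement by the method of \cite{brosnan-reichstein-vistoli1, brosnan-reichstein-vistoli-mixed-char, brosnan-reichstein-vistoli3, reichstein-vistoli-genericity}.

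For $x\in\abs{X}$ let $\cG_{x}$ be the residual gerbe of $X$ at $x$; since $X$ is weakly tame the automorphism group scheme $G_{x}$ of an object over $x$ is finite, so $\cG_{x}$ is a gerbe banded by $G_{x}$ over $\kappa(x)$, and $\trdeg_{k}\kappa(x)=\dim\overline{\{x\}}$. Every $\xi\in X(\ell)$ factors through $\cG_{x}$, where $x$ is its image point, and any field of definition of $\xi$ over $k$ necessarily contains $\kappa(x)$; hence $\ed_{k}\xi=\ed_{\kappa(x)}\xi+\dim\overline{\{x\}}$ and, taking suprema over all $\xi$,
\[
\ed_{k}X=\sup_{x\in\abs{X}}\bigl(\ed_{\kappa(x)}\cG_{x}+\dim\overline{\{x\}}\bigr).
\]
Applying the same reasoning at the generic point $\eta$ of $X$ gives $\ged_{k}X=\ed_{k(X)}\cG_{\eta}+\dim X$. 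Since $\dim X-\dim\overline{\{x\}}=\codim_{X}x$, the theorem is thus \emph{equivalent} to the assertion that
\[
\ed_{\kappa(x)}\cG_{x}\le\ed_{k(X)}\cG_{\eta}+\codim_{X}x\qquad\text{for every }x\in\abs{X}\,.
\]

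To prove this, fix $x$ of codimension $c$. Regularity of $X$ makes $\cO_{X,x}$ a regular local ring of dimension $c$, and linear reductivity of the finite group scheme $G_{x}$ lets one linearize its action on the (strictly henselian, then complete) local model of $X$ at $x$. The expected analogue, for weakly tame stacks, of the Abramovich--Olsson--Vistoli local structure theorem would then present $X$ near $x$ --- on an étale or formal neighbourhood that is an isomorphism on the residual gerbe at $x$, so that no extension of $\ell$ is introduced --- as a quotient $[\spec A/G_{x}]$ with $A$ smooth, carrying a $G_{x}$-fixed point over $x$ whose transverse directions form a representation $V$ of $G_{x}$ of dimension $c$, with $\ker(G_{x}\arr\GL(V))$ equal to the generic inertia of $X$. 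In this model $\cG_{x}$ is the residual gerbe at the closed point, whereas the generic residual gerbe is obtained from $\cG_{\eta}$ by a finite and then a purely transcendental extension of scalars, hence has essential dimension at most $\ed_{k(X)}\cG_{\eta}$. The inequality now follows from the genericity statement for a quotient $[V/G]$ of a representation faithful modulo the generic inertia by a finite linearly reductive group scheme, which gives $\ed_{\kappa(x)}\cG_{x}\le\ged_{\kappa(x)}[V/G_{x}]=\ed(\text{generic gerbe})+\dim V\le\ed_{k(X)}\cG_{\eta}+c$; the special case $[V/G]$ is contained in \cite{brosnan-reichstein-vistoli1} in characteristic zero and should be treated by the techniques of \cite{brosnan-reichstein-vistoli-mixed-char} in general. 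One concrete way to see the needed inequality for $[V/G_{x}]$ is to scale $V$ by $\gm$: this moves a given object over the closed point --- after a harmless purely transcendental extension of its field of definition, which leaves essential dimension unchanged --- to a dominant object, along a family extending over the origin, so essential dimension cannot increase under the specialization.

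The main obstacle is precisely the local structure step. The Abramovich--Olsson--Vistoli theorem is formulated for stacks with finite inertia, whereas a weakly tame stack has only locally finite inertia, so this theorem must first be extended to the present setting; and it is essential that the resulting chart be an isomorphism on the residual gerbe at $x$ (or that residue fields otherwise be fully controlled), since passing to a strict extension of $\ell$ would bound the relevant essential dimension only from below, which is the wrong direction. A further, technical, complication is that in positive characteristic $G_{x}$ can be a non-reduced finite linearly reductive group scheme, so the linearization, the descent of the local model, and the estimate for $[V/G_{x}]$ must all be carried out without the étale-ness available in characteristic zero, following \cite{brosnan-reichstein-vistoli-mixed-char}.
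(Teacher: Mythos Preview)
Your proposal follows the \emph{old} strategy of \cite{brosnan-reichstein-vistoli1, brosnan-reichstein-vistoli3, reichstein-vistoli-genericity}---local structure plus genericity for a linear quotient $[V/G]$---whereas the paper explicitly abandons that route in favour of a new valuative criterion for proper morphisms of tame stacks (Theorem~\ref{thm:valuative}): given $\spec K\to X$ over $\spec R\to M$, there is a unique $n$ and a representable lift $\radice[n]{\spec R}\to X$. The argument then runs by induction on the codimension of the image in the moduli space $M$: one picks a smooth chart $U\to X$, cuts out a DVR $R$ in $\cO_{U,u}$ sending the generic point to a strict generalization, descends the generic point of $R$ to a subfield $K_{1}$ realizing the essential dimension over that generalization, applies the valuative criterion over the sub-DVR $R_{1}=R\cap K_{1}$, and reads off a factorization of $\xi$ through $\cB_{\ell_{1}}\mmu_{n}$, whose essential dimension is at most~$1$. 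A transcendence-degree count (Lemma~\ref{lem:inequality2}) absorbs the extra $+1$ and closes the induction. No local structure theorem and no linearization are used.

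As for the gaps in your plan: the obstacle you single out---extending the Abramovich--Olsson--Vistoli local structure to weakly tame stacks---is not the real one. The paper dispatches the passage from weakly tame to tame in a few lines, exactly as in \cite[Theorem~6.1]{brosnan-reichstein-vistoli3}, via a representable \'etale neighbourhood with finite inertia (Keel--Mori) and the tame open substack; after that, the genuine AOV theorem applies. The substantive missing piece in your outline is the genericity statement for $[V/G]$ when $G$ is finite linearly reductive but \emph{not \'etale}: this is not contained in \cite{brosnan-reichstein-vistoli-mixed-char}, which treats only tame Deligne--Mumford stacks, and your $\gm$-scaling sketch does not supply it (scaling moves the point, but one still has to control the essential dimension of the residual gerbe at the origin, which is exactly what is at stake). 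So your plan, even granting the local structure, would require new work at precisely the point the paper's method is designed to avoid.
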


\begin{theorem}\label{thm:genericity2}
   Let $S$ be a regular integral scheme, $X$ an integral, weakly tame stack, $X \arr S$ a smooth morphism. If $s\in S$ is a point with residue field $k(s)$, then
   \[
   \ed_{k(s)} X_{k(s)}\le\ged_{k(S)} X_{k(S)}\,.
   \]
\end{theorem}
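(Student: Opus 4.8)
\step{Reducing to a discrete valuation ring}
The plan is to reduce to the case where $S$ is the spectrum of a discrete valuation ring, and there to combine the genericity theorem (Theorem~\ref{thm:genericity1}) for the special fibre with a discrete valuation estimate for finite tame gerbes extending item~(2) of the introduction. First I would replace $S$ by $\spec\cO_{S,s}$, discard the trivial case $X_s=\emptyset$, and replace $X$ by one of its connected components (which, $X$ being regular, are integral), so that $S=\spec A$ with $A$ regular local, $s$ its closed point, and $X\to S$ surjective. Then I would induct on $d=\dim A$: for $d\ge 1$, pick $t$ in a regular system of parameters, so $A/(t)$ is regular local of dimension $d-1$ with residue field $k(s)$ and fraction field $\kappa$, while $A_{(t)}$ is a DVR with residue field $\kappa$ and fraction field $k(S)$; applying the inductive hypothesis over $\spec(A/(t))$, the DVR case over $\spec A_{(t)}$, and the trivial inequality $\ged\le\ed$ (a supremum over fewer objects) gives
\[
\ed_{k(s)}X_{k(s)}\le\ged_{\kappa}X_{\kappa}\le\ed_{\kappa}X_{\kappa}\le\ged_{k(S)}X_{k(S)}\,.
\]
The various base changes of $X$ here may be disconnected, but each is smooth over a regular base, hence regular, so its components are integral and map onto the base, and since $\ed$ and $\ged$ only see the worst component this is harmless. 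So from now on $S=\spec R$ with $R$ a DVR, residue field $k$, fraction field $K$, and $X$ is integral.

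\step{Reducing to the generic gerbes}
Next I would use that the special fibre $X_k$ is smooth over $k$, hence a finite disjoint union of regular integral weakly tame stacks $Y$ locally of finite type over $k$; by Theorem~\ref{thm:genericity1}, $\ed_kY=\ged_kY$, and $\ed_kX_k=\max_Y\ed_kY$. For an integral stack locally of finite type over a field with finite generic stabiliser, the generic essential dimension decomposes as the transcendence degree of its function field plus the essential dimension of the residual gerbe at its generic point; writing $\mathcal G$ for the generic gerbe of $X_K$ (equivalently, the residual gerbe of $X$ at its generic point) and $\mathcal G_Y$ for the residual gerbe of $X$ at the generic point $\xi$ of $Y$, I obtain
\[
\ged_kY=\ed_{k(Y)}\mathcal G_Y+\trdeg_kk(Y),\qquad \ged_KX_K=\ed_{k(X)}\mathcal G+\trdeg_Kk(X)\,.
\]
Both $Y$ and $X_K$ have dimension equal to the relative dimension $\delta$ of $X\to\spec R$, and the generic stabilisers are finite, so both transcendence degrees equal $\delta$ and cancel. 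I am thus reduced to proving $\ed_{k(Y)}\mathcal G_Y\le\ed_{k(X)}\mathcal G$ for each component $Y$ of $X_k$.

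\step{The discrete valuation estimate, and the main obstacle}
The point $\xi$ is a codimension-one point of the regular stack $X$, and the crucial geometric input is that the stabiliser does not jump there: locally the stacky locus of $X$ is a finite union of fixed loci of subgroup schemes of a linearly reductive group acting on a smooth $R$-scheme, and such fixed loci are themselves smooth — hence flat — over $\spec R$, so this locus has no component contained in the special fibre. Consequently the inertia of $X$ is finite locally free near $\xi$, the coarse space map $X\to M$ is a gerbe in a neighbourhood of the image $\bar\xi$ of $\xi$, and $M$ is regular there (regularity descends along fppf gerbes), so $\cO:=\cO_{M,\bar\xi}$ is a DVR with fraction field $k(X)$ and residue field $k(Y)$; then $\mathcal H:=X\times_M\spec\cO$ is a gerbe over $\spec\cO$, banded by a finite flat linearly reductive group scheme, with generic fibre $\mathcal G$ and special fibre $\mathcal G_Y$. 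The inequality $\ed_{k(Y)}\mathcal G_Y\le\ed_{k(X)}\mathcal G$ is then precisely the statement that for a finite tame gerbe over a DVR the essential dimension of the special fibre is bounded by that of the generic fibre — the extension of item~(2) of the introduction from finite étale bands of order prime to the characteristic to arbitrary finite linearly reductive bands. This last estimate is where I expect the real work to lie: in positive and mixed characteristic one must pass to the fppf topology because of non-smooth bands such as $\mmu_p$, and cope with the non-smoothness of the band. By contrast the dévissage, the decomposition of $\ged$ via residual gerbes, and the smoothness of fixed loci of linearly reductive actions are comparatively routine.
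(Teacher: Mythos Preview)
Your outline is a genuine strategy, but it diverges sharply from the paper's proof, and the divergence matters because the step you flag as ``where the real work lies'' is exactly the step the paper's machinery is designed to bypass.

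\smallskip
\textbf{How the paper actually proceeds.} After the same reduction to the tame case, the paper does \emph{not} reduce to a gerbe over a DVR. Instead it applies Theorem~\ref{thm:genericity1} to the \emph{generic} fibre $X_{k(S)}$ (not the special fibre), reducing the claim to $\ed_{k(s)}X_{k(s)}\le \ed_{k(S)}X_{k(S)}$, and then inducts on the codimension of $s$ in $S$ via a single lemma (Lemma~\ref{lem:DVR}(3)). That lemma works object-by-object: given $\xi\in X_{k(s)}(\ell)$, one finds a DVR $R$ mapping to a smooth chart $U\to X$ whose closed point hits a lift of $\xi$ and whose generic point lands over a strict generalisation $s'$ of $s$; crucially, the smoothness of $X\to S$ lets one arrange (via Lemma~\ref{cut}) that $R$ is \emph{weakly unramified} over $\cO_{\overline{\{s'\}},s}$. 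One then compresses the generic point to a small field $K_1$, sets $R_1=R\cap K_1$, and uses the new valuative criterion for tame stacks (Theorem~\ref{thm:valuative}) to extend $\spec K_1\to X$ over $\radice[n]{\spec R_1}$. The weakly unramified condition forces $n=1$ (Lemma~\ref{loopram}), so $\xi$ descends to the residue field $\ell_1$ of $R_1$, and Lemma~\ref{lem:inequality} gives the transcendence-degree bound. No gerbe enters.

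\smallskip
\textbf{The gap in your approach.} Your final step---$\ed_{k(Y)}\mathcal G_Y\le \ed_{k(X)}\mathcal G$ for a finite linearly reductive gerbe over a DVR---is not available in the literature when the band is non-\'etale (e.g.\ contains $\mmu_p$ in characteristic $p$), and you do not prove it. The references for item~(2) of the introduction treat only the tame \'etale case. Proving your inequality in the generality you need seems to require exactly the kind of valuative-criterion argument the paper develops; so your reduction, while structurally clean, does not actually reduce the difficulty. In effect you have traded one instance of Theorem~\ref{thm:genericity2} (smooth $X$ over a DVR) for another (a tame gerbe over a DVR, which need not be smooth when the band is not \'etale), and the second is not obviously easier.

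\smallskip
\textbf{A secondary technical worry.} Your argument that the inertia is flat at the generic point $\xi$ of $Y$ relies on writing $X$ \'etale-locally as $[V/G]$ with $V$ smooth over $R$ and then invoking smoothness of fixed loci $V^H$ over $R$. But in the local structure theorem for tame stacks the chart $V\to X=[V/G]$ is only a $G$-torsor, hence fppf; when $G$ is not \'etale (say $G=\mmu_p$ in residue characteristic $p$) there is no reason for $V$ itself to be smooth over $R$, so the ``fixed loci are smooth, hence flat, hence no component in the special fibre'' step needs more care. This is fixable, but it is not routine in the non-Deligne--Mumford case.
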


\begin{theorem}\label{thm:genericity3}
Let $X$ be a regular integral weakly tame stack, which is locally of finite type over a noetherian $1$-dimensional local domain $R$ with fraction field $K$ and residue field $k$. Then
   \[
   \ed_{k}X_{k} \le \ged_{K}X_{K}+1\,.
   \]
\end{theorem}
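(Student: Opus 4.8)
The plan is to reduce the statement over the one-dimensional local domain $R$ to an application of Theorem \ref{thm:genericity2}, after passing to a suitable regular scheme over which $X$ spreads out. First I would replace $R$ by a discrete valuation ring: given any object $\xi \in X_k(\ell)$ over an extension $\ell/k$, its essential dimension is witnessed by a finitely generated subextension, so it suffices to bound $\ed_k \xi$ when $\ell$ is finitely generated over $k$; then, since $X$ is locally of finite type over $R$, the object $\xi$ together with the relevant data lives over a subring of finite type over $R$, and we may localize to reduce to the case where $R$ is the local ring of a point of codimension $1$ on an integral scheme of finite type over the original base — in particular to the case where $R$ is a DVR, possibly after a further base change, and $X \to \spec R$ is of finite type. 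The essential content is then: for $X$ a regular integral weakly tame stack with a finite-type morphism $X \to \spec R$, $R$ a DVR with fraction field $K$ and residue field $k$, show $\ed_k X_k \le \ged_K X_K + 1$.

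Next I would spread $\spec R$ out into a regular two-dimensional picture. Choose a regular integral scheme $S$ of finite type over the prime field (or over a field) together with a codimension-one point $\eta \in S$ whose local ring is $R$, so that $k = k(\eta)$ and $K = k(S)$, and such that $X$ extends to a regular integral weakly tame stack $\widetilde X$ over a neighborhood of $\eta$ in $S$ with $\widetilde X \to S$ of finite type. The obstacle is that $\widetilde X \to S$ need not be smooth; to invoke Theorem \ref{thm:genericity2} I need a smooth morphism to a regular base. Here the key move is to apply Theorem \ref{thm:genericity1}: since $\widetilde X$ is regular integral weakly tame and locally of finite type over a field (the prime field, say), we have $\ed \widetilde X = \ged \widetilde X = \ed_{k(\widetilde X)} \widetilde X_{k(\widetilde X)} + \dim \widetilde X$. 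The generic essential dimension of $\widetilde X$ over its base field is computed from the generic gerbe, and comparing the two stratifications — the one by $S$ and the one given by the moduli space of $\widetilde X$ — lets us relate $\ged_K X_K$ (which is $\ed_K$ of the generic gerbe of $X_K$ plus $\dim$ of the moduli space of $X_K$) to $\ed \widetilde X$.

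The heart of the argument is then a dimension count at the special fiber. The point $s = \eta$ has codimension $1$ in $S$, so $X_k = \widetilde X_k$ is a closed substack of $\widetilde X$ of "codimension one" in the stacky sense; any object of $X_k$ over a field $\ell$ gives an object of $\widetilde X$ over $\ell$, and a field of definition of the latter of transcendence degree $\ed \widetilde X$ contains, after at most one more parameter to account for the extra dimension picked up in moving from the special fiber into $S$, a field of definition of the former. Concretely: $\ed_k \xi \le \ed_{k(S)} \widetilde X_{k(S)} + 1 = \ged_K X_K + 1$, where the extra $+1$ is exactly the codimension of $\spec k$ in $\spec R$ and is the unavoidable loss when the morphism $X \to \spec R$ is only assumed regular and not smooth. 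I expect the main obstacle to be making this last inequality precise — controlling how the field of definition of an object of the special fiber grows when we view it inside the total space $\widetilde X$ and then descend, i.e. showing that passing from $\widetilde X_{k(S)}$ back down to $\widetilde X_k$ costs at most one in transcendence degree. This should follow from the behavior of essential dimension under specialization for weakly tame stacks, using linear reductivity of the automorphism groups to lift objects and their fields of definition across the DVR, together with the fact that $R$ is one-dimensional so only a single uniformizer is involved; when $X \to \spec R$ is smooth this loss disappears, recovering Theorem \ref{thm:genericity2} for $S = \spec R$ and giving the sharper bound without the $+1$.
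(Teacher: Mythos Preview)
Your strategy has a fundamental gap: you propose to spread $\spec R$ out to a regular scheme $S$ of finite type over a field (``the prime field, say'') and then apply Theorem~\ref{thm:genericity1} to $\widetilde X$ over that field. But a $1$-dimensional local domain $R$ need not contain any field at all --- the theorem is stated for an arbitrary noetherian $1$-dimensional local domain precisely to cover the mixed-characteristic case, which the paper itself highlights as the main novelty (``Theorem~\ref{thm:genericity3} is, as far as we know, the first result comparing essential dimensions in mixed characteristic without smoothness assumptions''). In mixed characteristic there is no field over which $\spec R$ sits as a finite-type scheme, so the spreading-out step, the application of Theorem~\ref{thm:genericity1} to $\widetilde X$, and the subsequent dimension count all collapse. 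Your sketch might be salvageable in equicharacteristic, but even there the requirement that $\widetilde X$ remain \emph{regular} after spreading out is not automatic, and the passage ``a field of definition of the latter \dots\ contains, after at most one more parameter \dots'' is where all the content lies and is left as a hope rather than an argument.

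The paper's route is quite different and does not spread out at all. After the common reduction from weakly tame to tame, one first applies Theorem~\ref{thm:genericity1} to the \emph{generic fiber} $X_K \to \spec K$ (which \emph{is} over a field) to replace $\ged_K X_K$ by $\ed_K X_K$; the remaining inequality $\ed_k X_k \le \ed_K X_K + 1$ is then a direct consequence of a key lemma proved via the new valuative criterion for proper tame stacks. Given $\xi \in X_k(\ell)$, one lifts to a smooth chart $U \to X$, cuts down to a DVR $R'$ in $U$ whose generic point lands over $K$, descends the generic point to a small subfield $K_1 \subseteq K'$ with $\trdeg_K K_1 \le \ed_K X_K$, and then extends $\spec K_1 \to X$ across $\spec R_1$ using the valuative criterion, which produces a \emph{root stack} lifting $\sqrt[n]{\spec R_1} \to X$. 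The special fiber of this root stack is $\cB_{\ell_1}\mmu_n$, and the ``$+1$'' is exactly $\ed_{\ell_1}\cB_{\ell_1}\mmu_n \le 1$; the transcendence-degree bookkeeping is handled by an elementary inequality for local embeddings of a $1$-dimensional local domain into a DVR (this is where Krull--Akizuki absorbs the non-regularity of $R$). So the extra parameter you were looking for is not a mysterious specialization loss but the concrete cost of the $\mmu_n$-gerbe appearing in the root-stack extension.
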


In fact, all these theorems have a more general formulation, which applies to individual objects, rather than whole stacks: see Section~\ref{sec:tame} for a discussion, and an application to an interesting geometric case, reduced local intersection curves. We only state this for Theorem~\ref{thm:genericity1}: see Theorem~\ref{thm:genericity4}.

Theorem~\ref{thm:genericity1} generalizes (1) above. Theorem~\ref{thm:genericity2} generalizes (2) above; not because $S$ is not assumed to be the spectrum of a DVR (in fact, the general case easily reduces to this), but mostly because $X$ is not assumed to be a gerbe over $S$. Theorem~\ref{thm:genericity3} is, as far as we know, the first result comparing essential dimensions in mixed characteristic without smoothness assumptions.

Theorem~\ref{thm:genericity2} also implies a slightly weaker version of Theorem~\ref{thm:genericity1}, in which the morphism $X \arr \spec k$ is assumed to be smooth.

Theorems \ref{thm:genericity2} and \ref{thm:genericity3} are related in spirit with the results in \cite{reichstein-scavia-specialization, reichstein-scavia-specialization-II}.

Our approach to the proof of the genericity theorem is very different from those in the references above. The main tool is an new version of the valuative criterion for properness of morphisms of tame stacks (Theorem~\ref{thm:valuative}), which was proved, in a slightly weaker form, by the first author in \cite{giulio-section-birational} in characteristic~$0$. The proof of the general case will appear in \cite{giulio-angelo-lang-nishimura}.

\section{Tame objects and weakly tame stacks}\label{sec:tame}

Here, and for the rest of the paper, \emph{algebraic stacks} will be defined as in \cite{laumon-moret-bailly}; that is, we will assume that the they have finite type diagonal. If $\ell$ is a field, we denote by $(\mathrm{Aff}/\ell)$ the category of affine schemes over $\ell$.

Let $X$ be an algebraic stack, $\ell$ a field, and $\xi$ an object in $X(\ell)$. The functor of automorphisms $\underaut_{\ell}\xi\colon (\mathrm{Aff}/\ell)\op \arr \cat{Grp}$ is a group scheme of finite type. We say that $\xi$ is \emph{tame} if $\underaut_{\ell}\xi\colon (\mathrm{Aff}/\ell)\op \arr \cat{Grp}$ is finite and linearly reductive (see \cite[\S2]{dan-olsson-vistoli1} for a thorough discussion of finite linearly reductive group schemes).

An algebraic stack $X$ is \emph{weakly tame} if every object over a field is tame; it is \emph{tame} if is weakly tame and has finite diagonal (see \cite{dan-olsson-vistoli1}).

The following is a strong form of Theorem~\ref{thm:genericity1}.

\begin{theorem}\label{thm:genericity4} 
Let $X$ be a regular integral stack, which is locally of finite type over a field $k$. If $\ell$ is an extension of $k$ and $\xi$ is a tame object of $X(\ell)$, then
   \[
   \ed_{k}\xi \leq \ged_{k}X\,.
   \]
\end{theorem}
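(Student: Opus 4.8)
The plan is to reduce the statement to a situation where we can apply a valuative-criterion argument. Let $\xi\in X(\ell)$ be a tame object, and let $L\subseteq\ell$ be a field of definition of $\xi$ with $\trdeg_{k}L=\ed_{k}\xi$; replacing $\ell$ by $L$ we may assume $\xi$ is defined over $\ell$ with $\trdeg_{k}\ell=\ed_{k}\xi$, and moreover, by spreading out, that $\xi$ comes from an object over a finitely generated $k$\dash algebra. The morphism $\spec\ell\to X$ classifying $\xi$ need not be dominant, so we first want to replace it by a dominant one without increasing the transcendence degree too much. The idea is to factor the map through the closure of its image: the scheme-theoretic image $Z\subseteq X$ of $\spec\ell\to X$ is an integral closed substack, $\xi$ is a \emph{dominant} object of $Z$, and $\trdeg_{k}\ell\ge\ged_{k}Z=\ed_{k(M_{Z})}(Z_{k(M_{Z})})+\dim M_{Z}$ where $M_{Z}$ is the moduli space of $Z$. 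So it suffices to compare $\ged_{k}Z$ with $\ged_{k}X$, and for this we need to understand how the generic gerbe of $Z$ sits inside $X$.

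The key step, and the one I expect to be the main obstacle, is precisely this comparison: given the generic gerbe $\cG=Z_{k(M_{Z})}$ of the closed substack $Z\subseteq X$, I want to produce a point of the generic gerbe of $X$ itself whose essential dimension is at least $\ed_{k(M_{Z})}\cG+\dim M_{Z}=\ged_{k}Z$. This is where regularity of $X$ and weak tameness enter. Because $X$ is regular and $\cG\to X$ is a closed immersion from a tame gerbe, the local structure of $X$ along $\cG$ should be governed by a normal bundle / deformation-to-the-normal-cone picture; the tameness guarantees that the relevant cohomological obstructions (to lifting the gerbe's band, and to descending along the relevant torsors) vanish, exactly as in the proof of the classical genericity theorem of Brosnan--Reichstein--Vistoli. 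Concretely, I would use the new valuative criterion for properness of morphisms of tame stacks, Theorem~\ref{thm:valuative}: applied to the inclusion of the generic point of $M_{Z}$ into a suitable regular model, together with a generically chosen DVR inside the local ring of $X$ transverse to $Z$, it lets one lift a maximally\dash twisted object from the special fiber (living on $\cG$) to a neighboring generic fiber inside $X$, picking up one extra independent parameter for each codimension step, so that the lift is a dominant object of $X$ with essential dimension $\ge\ged_{k}Z$. This is the heart of the matter and the step that genuinely uses the improved valuative criterion rather than the classical one.

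Having done that, the proof concludes formally: we obtain a dominant $\eta\in X(\ell')$ with $\ed_{k}\eta\ge\ged_{k}Z\ge\trdeg_{k}\ell=\ed_{k}\xi$ (after the reduction in the first paragraph the right\dash hand side is $\ed_{k}\xi$), and since $\eta$ is dominant, $\ed_{k}\eta\le\ged_{k}X$ by definition of generic essential dimension. Chaining these inequalities gives $\ed_{k}\xi\le\ged_{k}X$, which is the claim.

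A couple of technical points will need care. First, $X$ is only \emph{locally} of finite type over $k$, so all the spreading\dash out and moduli\dash space arguments must be carried out on a quasi\dash compact open substack containing the image of $\xi$; since $Z$ is the closure of a single point this is harmless. Second, $X$ is not assumed to have finite inertia — only finite automorphism groups pointwise (weak tameness) — so ``the moduli space of $Z$'' must be interpreted via the Keel--Mori theorem applied to $Z$, which does have finite inertia once we have cut down to a finite\dash type substack, or else one works directly with the generic gerbe $Z_{k(Z)}$ without naming a moduli space. Either way the formula $\ged_{k}Z=\ed_{k(M_{Z})}Z_{k(M_{Z})}+\dim Z$ holds by the discussion in the introduction. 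Finally, one should check that the tameness hypothesis on $\xi$ (not assumed for all of $X$, only for this one object) is enough: the gerbe $Z_{k(Z)}$ is then banded by a finite linearly reductive group scheme, which is all the valuative criterion and the obstruction\dash vanishing require.
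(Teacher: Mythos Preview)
Your reduction to the closed substack $Z$ does not actually simplify the problem, and the key step you describe has a genuine gap. First, a small but telling slip: in your first paragraph you write $\trdeg_{k}\ell\ge\ged_{k}Z$, but the only inequality you get from $\xi$ being dominant in $Z$ is $\ed_{k}\xi\le\ged_{k}Z$, i.e.\ the reverse. You use the correct direction in your final chain, so this is perhaps just a typo, but it signals that the logic has not been pinned down. More seriously, your plan to produce a dominant $\eta\in X(\ell')$ with $\ed_{k}\eta\ge\ged_{k}Z$ by ``lifting a maximally twisted object of $\cG$ to a neighboring generic fiber'' is essentially circular: knowing that essential dimension does not drop under such a generization is precisely the genericity theorem you are trying to prove. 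The valuative criterion of Theorem~\ref{thm:valuative} does not help here, because it goes the other way --- it extends an object from $\spec K$ (the generic point of a DVR) to $\radice[n]{\spec R}$ (containing the special point), not from the special fiber outward. It gives you no control whatsoever on the essential dimension of a chosen generic lift.

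The paper's argument runs in the opposite direction, and does not pass through $Z$ at all. After the Keel--Mori reduction to $X$ tame, one works with the moduli space $M$ of $X$ as base and inducts on the codimension of the image $s\in M$ of $\xi$. The inductive step (Lemma~\ref{lem:DVR}) goes: lift $\xi$ to a point $u$ of a smooth atlas $U\to X$; cut a DVR $\spec R\to U$ through $u$ whose generic point lands over a strict generalization $s'$ of $s$; by definition of $\ed_{k(s')}X_{k(s')}$, descend the resulting object of $X(K)$ to a subfield $K_{1}\subseteq K$ of small transcendence degree; set $R_{1}=R\cap K_{1}$; and only now apply Theorem~\ref{thm:valuative} to extend $\spec K_{1}\to X$ across $\spec R_{1}$, obtaining $\radice[n]{\spec R_{1}}\to X$. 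The closed fiber of this root stack is $\cB_{\ell_{1}}\mmu_{n}$, so $\xi$ factors through it and $\ed_{k(s)}\xi\le\trdeg_{k(s)}\ell_{1}+1$; elementary transcendence-degree inequalities then give $\ed_{k}\xi\le\ed_{k}X_{k(s')}$, and induction finishes. In short: the valuative criterion is used to pull the \emph{descended} generic object back to the special fiber, thereby bounding $\ed_{k}\xi$ from above --- not to push a special object out to the generic fiber while preserving its essential dimension from below, which is what your outline asks for and does not supply.
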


If $X$ has finite diagonal, then there exists an open substack $X^{\mathrm{tame}} \subseteq X$ such that an object $\xi \in X(\ell)$ is tame if an only if $\xi$ is in $X^{\mathrm{tame}}(\ell)$ (see \cite[Proposition~3.6]{dan-olsson-vistoli1}); however, this fails in general if the diagonal of $X$ is only quasi-finite. Thus, for stacks with finite diagonal Theorem~\ref{thm:genericity1} and Theorem~\ref{thm:genericity4} are equivalent, but not in general.

There are many geometrically natural stacks that are Deligne--Mumford in characteristic~$0$, but not in positive characteristic, as the corresponding objects may have infinitesimal automorphisms. Examples include polarized K3 surfaces, surfaces of general type, polarized torsors for abelian varieties, stable maps. In analyzing the essential dimension of these classes of geometric objects in positive characteristic it is essential to go outside the framework of Deligne--Mumford stacks. Here is an example.

Let $k$ be a field, $\ell$ an extension of $k$, $C$ a geometrically reduced, geometrically connected, and local complete intersection curve over $\ell$ of arithmetic genus $g \geq 2$. We are interested in the essential dimension $\ed_{k}C$, that is, the smallest transcendence degree of a field of definition of $C$ over $k$. If $\cha k = 0$ and $C$ has a finite automorphism group, it is proved in \cite[Theorem~7.3]{brosnan-reichstein-vistoli3} that 
\begin{equation}\label{inequality}
   \ed_{k}C \leq 
   \begin{cases}
   3g-3 &\text{if }g \geq 3\\
   5 & \text{if }g = 2
   \end{cases}
\end{equation}

We do not know if this holds in positive characteristic, but we can prove the following. Let us say that $C$ is \emph{tame} if $\underaut_{\ell}C$ is finite and tame. If $\cha \ell = 0$, then the tame condition is automatic.

\begin{proposition}
The inequality \ref{inequality} above holds in any characteristic, if $C$ is tame.
\end{proposition}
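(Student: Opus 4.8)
The plan is to reduce the statement to Theorem~\ref{thm:genericity4} applied to a suitable moduli stack parametrizing the curves in question, together with a dimension count on that moduli stack. Concretely, fix $g \ge 2$ and let $\cC_{g}$ denote the stack over $\spec\ZZ$ (or over the prime field) whose objects are families of geometrically reduced, geometrically connected, local complete intersection curves of arithmetic genus $g$ with finite automorphism group scheme. The first task is to verify that an appropriate open substack of $\cC_{g}$ is a regular algebraic stack, locally of finite type over the prime field. Regularity is the delicate point: one expects $\cC_{g}$ (suitably defined, e.g. via the Hilbert scheme of canonically or tri-canonically embedded curves, or as an open in a stack of all proper l.c.i.\ curves) to be smooth over $\spec\ZZ$ in a neighbourhood of any curve with finite automorphisms, using deformation theory of l.c.i.\ curves — the obstruction space $\mathrm{H}^{1}$ of the cotangent complex vanishes appropriately, or at least the smooth locus contains all points we care about. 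I would cite or adapt the deformation-theoretic analysis already used in \cite{brosnan-reichstein-vistoli3} for the characteristic-zero case, now over $\ZZ$.

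Next, observe that if $C$ is a tame curve over $\ell$ in the sense defined above, then $C$ defines a tame object $\xi_{C}$ of this stack $\cC_{g}$ over $\ell$: by hypothesis $\underaut_{\ell}C$ is finite and linearly reductive, which is exactly the tameness condition on the object. Moreover $\ed_{k}C = \ed_{k}\xi_{C}$, since a field of definition of the object of $\cC_{g}$ is the same as a field of definition of the curve $C$ (the stack literally parametrizes such curves). Thus Theorem~\ref{thm:genericity4} gives
   \[
   \ed_{k}C = \ed_{k}\xi_{C} \le \ged_{k}\cC_{g}\,.
   \]
It therefore suffices to bound $\ged_{k}\cC_{g}$ — equivalently, to bound the generic essential dimension of the relevant component of the moduli stack of l.c.i.\ curves of genus $g$ — by $3g-3$ for $g \ge 3$ and by $5$ for $g = 2$.

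For the final dimension estimate I would argue exactly as in \cite[Theorem~7.3]{brosnan-reichstein-vistoli3}: the generic essential dimension equals the dimension of the coarse moduli space plus the essential dimension of the generic gerbe, and one shows the generic object has trivial (or finite, controllable) automorphism group so that $\ged_{k}\cC_{g} = \dim \cC_{g}$, which is $3g-3$ for $g \ge 3$ and $5$ for $g = 2$ (the extra contribution at $g = 2$ coming from the moduli of hyperelliptic-type degenerations, as in the cited reference); alternatively one bounds $\dim M$ by stratifying by the type of singularities and using that smoothings decrease nothing. The main obstacle, as flagged above, is establishing that the moduli stack of l.c.i.\ curves one uses is \emph{regular} at the points corresponding to curves with finite automorphism group, in all characteristics — in characteristic zero this is handled in \cite{brosnan-reichstein-vistoli3}, and the point of invoking the weakly tame machinery (Theorem~\ref{thm:genericity4}) rather than the Deligne--Mumford genericity theorem is precisely that in positive characteristic these curves may carry infinitesimal automorphisms, so the stack is not Deligne--Mumford, yet it remains weakly tame once we restrict to tame curves. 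Once regularity is in hand, everything else is bookkeeping already present in the literature.
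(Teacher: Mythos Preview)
Your approach is essentially the paper's: apply Theorem~\ref{thm:genericity4} to the moduli stack of reduced, connected, l.c.i.\ curves of genus $g$, and then invoke the computation of the generic essential dimension from \cite{brosnan-reichstein-vistoli3}. However, you overcomplicate the argument in ways that create unnecessary difficulties.

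First, there is no need to restrict to curves with finite automorphism group scheme, nor to ``restrict to tame curves'' in order to obtain a weakly tame stack. The whole point of Theorem~\ref{thm:genericity4}, as opposed to Theorem~\ref{thm:genericity1}, is that it applies to an individual tame object of a regular stack that need not itself be weakly tame. The paper works with the full stack $\fM_{g}^{\mathrm{fin}}$ of all such curves over $\spec k$, with no finiteness or tameness restriction on automorphisms; indeed, as noted immediately after the proof, the tame points of $\fM_{g}^{\mathrm{fin}}$ do \emph{not} form an open substack, so your proposed restriction would not produce a nice substack in any case. Second, regularity is not the delicate point you make it out to be: by standard deformation theory of l.c.i.\ curves, the full stack $\fM_{g}^{\mathrm{fin}}$ is smooth (hence regular) over $\spec k$, in every characteristic, with no hypothesis on automorphisms. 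Finally, since the generic l.c.i.\ curve is smooth, $\ged_{k}\fM_{g}^{\mathrm{fin}} = \ged_{k}\cM_{g}$, and the latter is computed in \cite{brosnan-reichstein-vistoli3}; there is no further stratification or analysis needed.
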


\begin{proof}
Let $\fM_{g}^{\mathrm{fin}}$ be the stack over $\spec k$ whose objects over a scheme $S$ are proper flat finitely presented morphisms $C \arr S$, whose fibers are geometrically reduced, geometrically connected, and local complete intersection curves of arithmetic genus $g$. By standard results in deformation theory, this is an integral smooth algebraic stack over $\spec k$. The integer $\ged_{k}\fM_{g}^{\mathrm{fin}} = \ged_{k}\cM_{g}$ is computed in \cite{brosnan-reichstein-vistoli3}; it is equal to $3g-3$ when $g \geq 3$, while it is $5$ when $g = 2$. Thus the result follows from Theorem~\ref{thm:genericity4}.
\end{proof}

One can show that the tame points of $\fM_{g}^{\mathrm{fin}}$ do not form an open substack, so Theorem~\ref{thm:genericity1} would not be sufficient for this application.

Here is an example of a tame curve of the type above, whose automorphism group scheme is not reduced.

\begin{example}
Suppose that $\cha k = p > 0$. Choose three rational points on $\PP^{1}_{k}$, say $0$, $1$ and $\infty$, and call $V \simeq \spec k[x]/\bigl((x-1)^{p}\bigr)$ the $p\th$ infinitesimal neighborhood of $1$. Take two copies of $\PP^{1}$, and glue the two copies of $V$. Call $C_{1}$ and $C_{2}$ two smooth curves with trivial automorphism groups and not isomorphic, and glue them to the union of two $\PP^{1}$, as in the picture below.

   \[
   \includegraphics{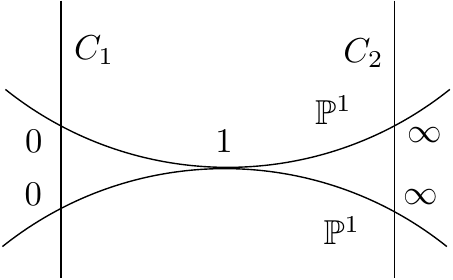}
   \]

\noindent It is an exercise to show that the automorphism group scheme of the resulting curve is $\mmu_{p}$, which is finite and linearly reductive, but not reduced.
\end{example}

\section{Proofs of the theorems}

Let us start with some preliminary results.

\subsection*{A version of the valuative criterion for properness of tame algebraic stacks} 

A basic example of tame stacks is \emph{root stacks} (see \cite[Appendix~B2]{dan-tom-angelo2008}). We will need this in the following situation. Let $R$ be a DVR with uniformizing parameter $\pi$ and residue field $k \eqdef R/(\pi)$. If $n$ is a positive integer, we will denote by $\radice[n]{\spec R}$ the $n\th$ root of the Cartier divisor $\spec k\subseteq \spec R$. It is a stack over $\spec R$, such that given a morphism $\phi\colon T \arr \spec R$, the groupoid of liftings $T \arr \radice[n]{\spec R}$ is equivalent to the groupoid whose objects are triples $(L, s, \alpha)$, where $L$ is an invertible sheaf on $T$, $s \in L(T)$ is a global section of $L$, and $\alpha$ is an isomorphism $L^{\otimes n} \simeq \cO_{T}$, such that $\alpha(s^{\otimes n}) = \phi^\sharp(\pi)$. Alternatively, $\radice[n]{\spec R}$ can be described as the quotient stack $[\spec R[t]/(t^{n} - \pi)/\mmu_{n}]$, where the action of $\mmu_{n}$ on $\spec R[t]/(t^{n} - \pi)$ is by multiplication on $t$. The morphism $\rho\colon \radice[n]{\spec R} \arr \spec R$ is an isomorphism outside of $\spec k \subseteq \spec R$, while the reduced fiber $\rho^{-1}(\spec k)_{\mathrm{red}}$ is non-canonically isomorphic to the classifying stack $\cB_{k}\mmu_{n}$.

Our version of the valuative criterion is as follows.

\begin{theorem}\label{thm:valuative}
Let $f\colon X \arr S$ be a proper morphism where $S$ is a scheme and $X$ a tame stack, $R$ a DVR with quotient field $K$. Suppose that we have a $2$-commutative square
   \[
   \begin{tikzcd}
   \spec K \ar[d, hook]\rar & X \dar{f}\\
   \spec R \rar & S
   \end{tikzcd}
   \]
Then there exists a unique positive integer $n$ and a representable lifting $\radice[n]{\spec R} \arr X$, unique up to a unique isomorphism, of the given morphism $\spec R \arr S$, making the diagram
   \[\begin{tikzcd}
    &   \spec K\rar\ar[dl, hook]   \   &   X\dar   &    \\
      \radice[n]{\spec R}\rar\ar[rru]   &   \spec R\rar\ar[from=u,hook,crossing over]   &   S      & 
   \end{tikzcd}\]
$2$-commutative.
\end{theorem}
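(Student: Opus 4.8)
The plan is to reduce the problem to an étale-local statement on $X$, solve it there by an explicit root-stack construction, and then descend. First I would replace $S$ by $\spec R$ (this is harmless: we only need the morphism $\spec R \to S$ in order to produce a morphism $X_R \eqdef X \times_S \spec R \to X$; since $X$ is tame and $X \to S$ is proper, $X_R \to \spec R$ is proper and tame, and a lifting $\radice[n]{\spec R} \to X_R$ composed with the projection gives what we want, while the given $2$-commutative square becomes an object of $X_R(\spec K)$). So the problem is: given a proper tame stack $Y \to \spec R$ and a point $\eta \in Y(K)$, find a positive integer $n$ and a representable $\radice[n]{\spec R} \to Y$ restricting to $\eta$ over $\spec K$. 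After possibly replacing $R$ by its completion (using that $\radice[n]{-}$ and the valuative criterion are insensitive to completion, and descending the resulting lifting by fpqc or by a standard limit argument — here I expect to need a small spreading-out/approximation lemma, or one simply works over the henselization), I may assume $R$ complete.

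The key geometric input is the local structure of tame stacks: by \cite{dan-olsson-vistoli1}, étale-locally on its coarse space a tame stack is a quotient $[\spec A / G]$ with $G$ finite linearly reductive. Let $Y \to M$ be the coarse moduli space; $M \to \spec R$ is proper, and the image of $\eta$ is a point of $M(K)$ which, by the valuative criterion for proper schemes (after possibly a finite extension of $R$, which I would like to avoid — see below), extends to $x \in M(R)$ hitting the closed point $m_0 \in M(k)$. Now work in an étale neighborhood $[\spec A/G] \to M$ of $m_0$: the object $\eta$, pulled back, is a $G$-torsor $P \to \spec K$ together with a $G$-equivariant map $P \to \spec A$. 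Since $G$ is finite linearly reductive and $R$ is a complete DVR, $G$-torsors over $\spec K$ are controlled by $\H^1(\spec K, G)$, and the key point is that such a torsor extends to a $G$-torsor over $\radice[n]{\spec R}$ for a suitable $n$: this is exactly the content of the root-stack description of tame gerbes over a DVR from result~(2) in the introduction, and more concretely it uses that $\mathcal{B}_K \mmu_n$ and twisted forms absorb the ramification. Concretely, writing the étale-local chart, extending $P$ amounts to extending a torsor under a (twisted form of a) diagonalizable-by-étale group, which is governed by the tame fundamental group of $\spec R$, i.e. by $\mmu_n$'s; choosing $n$ to kill the ramification gives the lift on the chart.

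Having produced the lift étale-locally, I would glue: the charts of $M$ near $m_0$ and the resulting root-stack lifts must be shown to agree on overlaps, which is where uniqueness enters and makes the gluing automatic. For uniqueness of $n$ and of the lift: two representable liftings $\radice[n]{\spec R} \to Y$ and $\radice[n']{\spec R} \to Y$ agreeing over $\spec K$ give, by properness and separatedness of the inertia of $Y$, an isomorphism over the complement of the closed fiber, which extends; comparing the stabilizers of the closed point forces $n = n'$ (the stabilizer of the closed point of $\radice[n]{\spec R}$ is $\mmu_n$, and representability pins down the map on automorphism groups), and then the isomorphism of the two liftings is unique because $\radice[n]{\spec R}$ is reduced... actually because any two such isomorphisms differ by an automorphism of the lifting which is the identity over $\spec K$, hence the identity by separatedness. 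The main obstacle I anticipate is twofold: (i) producing the integer $n$ canonically and showing the torsor genuinely extends over $\radice[n]{\spec R}$ without a spurious finite base extension of $R$ — this is the technical heart, and is precisely the improvement over \cite{giulio-section-birational}; and (ii) the descent/gluing of the local lifts, which should be formal once uniqueness is in hand but requires care because the charts are étale rather than Zariski, so one must check the cocycle condition up to the (unique) isomorphisms.
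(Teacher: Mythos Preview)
The paper does not contain a proof of Theorem~\ref{thm:valuative}: immediately after the statement it says ``The proof of the theorem will appear in \cite{giulio-angelo-lang-nishimura}.'' So there is nothing in this paper to compare your proposal against; the theorem is used here as a black box.

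That said, your outline follows the expected shape of such an argument (reduce to $S=\spec R$, pass to the coarse space, use the \'etale-local quotient presentation of tame stacks from \cite{dan-olsson-vistoli1}, extend the torsor after an $n\th$ root, and glue via uniqueness). Two points deserve flagging. First, the coarse moduli space $M$ is a priori only an algebraic space, not a scheme, so when you invoke the valuative criterion to extend $\eta$ to an $R$-point of $M$ you should phrase it for proper algebraic spaces; this is fine, but your parenthetical ``after possibly a finite extension of $R$, which I would like to avoid'' is a symptom of not having pinned this down. Second, and more seriously, the step you yourself identify as ``the technical heart''---producing the canonical $n$ and extending the $G$-torsor over $\radice[n]{\spec R}$ without any residual base extension, for $G$ finite linearly reductive in arbitrary (possibly positive or mixed) characteristic---is asserted rather than proved. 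Appealing to ``result~(2) in the introduction'' is circular in spirit: that result in \cite{brosnan-reichstein-vistoli3, brosnan-reichstein-vistoli-mixed-char} concerns \'etale gerbes, whereas here $G$ may have a nontrivial diagonalizable connected part, and handling the interaction between the diagonalizable and \'etale pieces (and showing representability of the resulting lift) is exactly where the work lies. Your uniqueness argument is essentially correct in outline: two representable lifts agree over $\spec K$, separatedness of the diagonal propagates the isomorphism, and comparing the induced monomorphisms $\mmu_{n}\hookrightarrow \underaut_{\ell}\xi$ and $\mmu_{n'}\hookrightarrow \underaut_{\ell}\xi$ at the closed point forces $n=n'$.
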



The proof of the theorem will appear in \cite{giulio-angelo-lang-nishimura}.

We say that an extension of DVRs $R\subseteq R'$ is \emph{weakly unramified} if its ramification index is $1$; in other words, if a uniformizing parameter of $R$ maps to a uniformizing parameter of $R'$.

\begin{lemma}\label{loopram}
   Let $R \subseteq R'$ be a weakly unramified extension of DVRs, and let $K \subseteq K'$ be the fraction fields of $R$ and $R'$ respectively. Given a diagram 
   \[
   \begin{tikzcd}
   \spec K \ar[d, hook]\rar & X \dar{f}\\
   \spec R \rar & S
   \end{tikzcd}
   \]
with the same hypotheses as in Theorem~\ref{thm:valuative}, if there is an extension $\spec R'\to X$ of $\spec K' \to \spec K \to X$, then there is an extension $\spec R\to X$, too.
\end{lemma}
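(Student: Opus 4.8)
The goal is to descend an extension over $R'$ to one over $R$, and the natural tool is Theorem~\ref{thm:valuative}: from the square over $R$ we get a positive integer $n$ and a representable lifting $\radice[n]{\spec R}\to X$. The strategy is to show that $n=1$, since $\radice[1]{\spec R}=\spec R$ and a lifting $\radice[1]{\spec R}\to X$ is exactly an extension $\spec R\to X$. So everything reduces to the following: if the root stack index forced by the $R$-square is $n$, and an honest extension $\spec R'\to X$ exists over the weakly unramified extension $R\subseteq R'$, then $n=1$.

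**Carrying this out.** First I would apply Theorem~\ref{thm:valuative} to the original diagram over $R$, obtaining $n\ge 1$ and a representable $\radice[n]{\spec R}\to X$ lifting $\spec R\to S$ and compatible with $\spec K\to X$. Next, base change everything along $\spec R'\to\spec R$. Because $R\subseteq R'$ is weakly unramified, a uniformizer $\pi$ of $R$ is a uniformizer of $R'$, and hence the fibre product $\radice[n]{\spec R}\times_{\spec R}\spec R'$ is canonically $\radice[n]{\spec R'}$, the $n$\th root stack of the closed point of $\spec R'$ — this is where the weakly unramified hypothesis is used essentially, since for a ramified extension one would instead get a root stack with a different index, or a non-reduced structure. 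So over $R'$ we have a representable lifting $\radice[n]{\spec R'}\to X$ of $\spec R'\to S$ agreeing with $\spec K'\to X$ on the generic point. But we are also given an honest extension $\spec R'\to X$ of $\spec K'\to X$; by the uniqueness clause in Theorem~\ref{thm:valuative} applied over $R'$, the root stack index produced there must be the one making both liftings agree, and a lifting through $\spec R'$ forces that index to be $1$. Hence $n=1$, i.e.\ $\radice[n]{\spec R}=\spec R$, and the lifting $\radice[n]{\spec R}\to X$ obtained in the first step is the desired extension $\spec R\to X$.

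**The main obstacle.** The delicate point is the uniqueness argument: I need to know that the positive integer $n$ in Theorem~\ref{thm:valuative} is genuinely an invariant of the $2$-commutative square, so that producing \emph{any} extension over $R'$ (equivalently, any lifting through the trivial root stack $\radice[1]{\spec R'}$) is incompatible with a nontrivial $n$. Concretely, if $n>1$ then $\radice[n]{\spec R'}\to X$ restricted to the reduced closed fibre factors through $\cB_{k'}\mmu_{n}$, whereas $\spec R'\to X$ has closed fibre a plain $k'$-point; a representable morphism $\radice[n]{\spec R'}\to X$ induces an injection $\mmu_{n}\hookrightarrow\underaut(\text{closed fibre object})$, and if the closed fibre lifts to a point of $X(k')$ coming from $\spec R'\to X$ then composing with the $\mmu_n$-gerbe structure would contradict the uniqueness (up to unique isomorphism) of the lifting. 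So the crux is to extract from the statement of Theorem~\ref{thm:valuative} — or from a short complement to it — that two liftings $\radice[n]{\spec R}\to X$ and $\radice[m]{\spec R}\to X$ of the same square, compatible on $\spec K$, must have $n=m$ and be isomorphic; granting that, the lemma follows immediately by the base-change computation above.
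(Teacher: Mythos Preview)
Your argument is correct and follows essentially the same route as the paper: apply Theorem~\ref{thm:valuative} over $R$ to obtain the integer $n$, use the weakly unramified hypothesis to identify $\radice[n]{\spec R}\times_{\spec R}\spec R'$ with $\radice[n]{\spec R'}$ (equivalently, to see that $\radice[n]{\spec R'}\to\radice[n]{\spec R}$ is representable, so that the composite $\radice[n]{\spec R'}\to X$ is representable), and then invoke the uniqueness clause of Theorem~\ref{thm:valuative} over $R'$ together with the given representable lifting $\spec R'=\radice[1]{\spec R'}\to X$ to force $n=1$.

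Your ``main obstacle'' paragraph is overcautious: the statement of Theorem~\ref{thm:valuative} already asserts that the integer $n$ and the representable lifting are unique (up to unique isomorphism), so no further argument is needed to know that two representable liftings through $\radice[n]{\spec R'}$ and $\radice[1]{\spec R'}$ of the same square must have $n=1$. You can drop that discussion entirely.
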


   \begin{proof}
   By Theorem~\ref{thm:valuative}, there exists a unique positive integer $n$ with a unique representable extension $\radice[n]{\spec R} \to X$, we want to show that $n=1$. Since $R \subseteq R'$ is weakly unramified, then $\radice[n]{\spec R'} \arr \radice[n]{\spec R}$ is representable, hence the composition $\radice[n]{\spec R'} \arr \radice[n]{\spec R} \arr X$ is representable, too. By hypothesis there exists an extension $\spec R' \arr X$, hence the uniqueness part of Theorem~\ref{thm:valuative} implies that $n=1$.
   \end{proof}

\subsection*{Some easy commutative algebra} In the proof of the main theorem we will also use the following well known facts, for which we do not know a reference.

\begin{lemma}\label{lem:precut}
Let $B$ be a regular local ring with $\dim B \ge 1$, and $b \in \frm_{B}$ a non-zero element. There exists a surjective homomorphism $\phi\colon B \arr R$, where $R$ is a DVR, such that $\phi(b) \neq 0$. Furthermore, if $b \in \frm_{B} \setminus\frm_{B}^{2}$ we can find such a $\phi\colon  B \arr R$ with $\phi(b) \in \frm_{R}\setminus\frm_{R}^{2}$.
\end{lemma}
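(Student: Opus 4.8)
The plan is to reduce to the one-dimensional case by choosing a suitable regular sequence of parameters. Since $B$ is regular local of dimension $d \ge 1$, its maximal ideal $\frm_B$ is generated by a regular system of parameters $x_1, \dots, x_d$, and for any such system $B/(x_1, \dots, x_{d-1})$ is a regular local ring of dimension $1$, i.e. a DVR. So the statement amounts to showing that we may pick a regular system of parameters whose first $d-1$ elements do not involve $b$ in a bad way.

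\textbf{The general case.} Suppose first $b \in \frm_B^m \setminus \frm_B^{m+1}$ for some $m \ge 1$. I would argue by descending induction on $\dim B$. If $\dim B = 1$ then $B$ itself is a DVR, take $\phi = \id$; since $b \ne 0$ we are done. If $\dim B \ge 2$, I want to find an element $x \in \frm_B \setminus \frm_B^2$ such that $B' \eqdef B/(x)$ is regular local of dimension $\dim B - 1$ and the image $\bar b$ of $b$ in $B'$ is still non-zero; then I apply the inductive hypothesis to $(B', \bar b)$ and compose. The existence of such an $x$ is where a small genericity/prime avoidance argument is needed: the condition that $B/(x)$ be regular of the right dimension holds for every $x \in \frm_B \setminus \frm_B^2$; the condition $\bar b \ne 0$, i.e. $b \notin (x)$, is the one to arrange. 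Pass to the associated graded ring $\mathrm{gr}_{\frm_B} B$, which is a polynomial ring $k_B[X_1, \dots, X_d]$ over the residue field $k_B$; the leading form $\mathrm{in}(b) \in \mathrm{gr}_{\frm_B}B$ is a nonzero homogeneous polynomial of degree $m$. It suffices to choose a linear form $X = \sum \lambda_i X_i$ (lifting to $x \in \frm_B$) such that $\mathrm{in}(b) \notin (X)$ in the polynomial ring, since then $b \notin (x) + \frm_B^{N}$ for all $N$, hence $b \notin (x)$ by Krull intersection in the domain $B/(x)$ — wait, more carefully: if $\mathrm{in}(b) \notin (X)$ then the leading form of $b$ modulo $(x)$ is still nonzero, so $\bar b \ne 0$. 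Such a linear form $X$ exists because $\mathrm{in}(b)$, being a nonzero polynomial, is not divisible by all linear forms — unless $k_B$ is too small, in which case one first replaces $B$ by a faithfully flat extension $B \otimes_{k_B} k_B(t)$-type completion, or simply notes that if $\mathrm{in}(b)$ is divisible by every linear form in $d \ge 2$ variables then it is divisible by two coprime ones, impossible. This gives the desired $\phi\colon B \to R$ with $\phi(b) \ne 0$.

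\textbf{The refined case.} Now suppose $b \in \frm_B \setminus \frm_B^2$, so $m = 1$ and $\mathrm{in}(b)$ is a nonzero linear form $\ell \in k_B[X_1, \dots, X_d]$. Running the same induction, at each step I must choose the linear form $X$ so that not only $\ell \notin (X)$ but moreover the image of $\ell$ in $k_B[X_1, \dots, X_d]/(X)$ is again a nonzero linear form — which is automatic once $\ell \notin (X)$, since $\ell$ has degree $1$. Thus at every stage the image of $b$ stays in $\frm \setminus \frm^2$ of the successive regular local rings, and we land in a DVR $R$ with $\phi(b) \in \frm_R \setminus \frm_R^2$ as required. (Concretely: extend $b = x_d$ to a regular system of parameters $x_1, \dots, x_{d-1}, b$ of $B$ — possible exactly because $b \notin \frm_B^2$ — and set $R = B/(x_1, \dots, x_{d-1})$; then $R$ is a DVR with uniformizer the image of $b$.)

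\textbf{Main obstacle.} The only genuine subtlety is the prime-avoidance/genericity step: ensuring that a hyperplane $x \in \frm_B \setminus \frm_B^2$ can be chosen cutting $B$ down to a regular local ring in which $b$ survives. For $b \notin \frm_B^2$ this is elementary (complete $b$ to a system of parameters), so the refined assertion is actually the easy one; for general $b$ one needs the associated-graded argument above, and a moment's care when the residue field is finite — handled by the observation that a nonzero form in $\ge 2$ variables cannot be divisible by all linear forms, or by a harmless residue-field extension. Everything else is bookkeeping with regular sequences.
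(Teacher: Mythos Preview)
Your inductive strategy matches the paper's, and the refined case ($b \notin \frm_B^2$) is handled correctly --- indeed more directly than in the paper, by simply extending $b$ to a regular system of parameters. The gap is in the general case when the residue field $k_B$ is finite. Your proposed fix, ``if $\mathrm{in}(b)$ is divisible by every linear form then it is divisible by two coprime ones, impossible'', is false as stated: being divisible by two coprime elements is unremarkable. Concretely, take $B = \FF_2[X,Y]_{(X,Y)}$ and $b = XY(X+Y)$; then $\mathrm{in}(b)$ is divisible by all three linear forms $X$, $Y$, $X+Y$ over $\FF_2$, so no linear form in the associated graded witnesses $b \notin (x)$. A suitable $x$ does exist --- for instance $x = X + Y^2$ --- but your associated-graded criterion is too coarse to see it, since $\mathrm{in}(X+Y^{2}) = X$ divides $\mathrm{in}(b)$. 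The faithfully-flat extension idea does not rescue this either: you need a surjection from the original $B$, and a linear form found over $k_B(t)$ need not specialize to one over $k_B$ that avoids $\mathrm{in}(b)$ (in the example, $X + tY$ specializes only to $X$ or $X+Y$).

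The paper sidesteps the associated graded and works directly in $B$: since a regular local ring is a UFD, one picks $x, y \in \frm_B$ with independent images in $\frm_B/\frm_B^2$ and checks that the elements $x + y^d$ for $d \ge 1$ are irreducible and pairwise non-associate. Hence only finitely many of them can divide $b$, and any of the remaining ones serves as the desired $c \in \frm_B \setminus \frm_B^2$ with $b \notin (c)$. This supplies infinitely many candidates regardless of the size of $k_B$, which is exactly what your argument is missing.
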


\begin{proof}
If $\dim B = 1$ then we take $B = R$. If $\dim B \geq 2$ we proceed by induction on $\dim B$; it is enough to show that there exists $c \in \frm_{B}\setminus \frm_{B}^{2}$ such that $b \notin (c)$, and $b \notin (c) + \frm_{B}^{2}$ if $b \in \frm_{B} \setminus \frm_{B}^{2}$.

If $b \in \frm_{B}\ \setminus\frm_{B}^{2}$ it is enough to chose $c \in \frm_{B}$ so that the images of $b$ and $c$ in $\frm_{B}/\frm_{B}^{2}$ are linearly independent.

In general, we know that the ring $B$ is a UFD. Let $x$ and $y$ be elements of $\frm_{B}$, whose images in $\frm_{B}/\frm_{B}^{2}$ are linearly independent; the elements $x + y^{d}$, for $d \geq 1$ are irreducible. We claim that they are pairwise not associate: this implies that only finitely many of them can divide $b$, which implies the claim.

To check this, assume that $x + y^{e} = u(x + y^{d})$ for some $u \in B \setminus \frm_{B}$ with $e > d > 0$. Reducing modulo $(y)$ we see that $u \equiv 1 \pmod{y}$, while reducing modulo $(x)$ and dividing by $y^{d}$, which is possible because $A/(x)$ is a domain, we get $u \equiv y^{e-d} \pmod{x}$, and this is clearly a contradiction.
\end{proof}

\begin{lemma}\label{cut}
   Let $A$ be a DVR, $B$ a regular local ring, $A \to B$ a local homomorphism. Assume that the induced ring homomorphism $\frm_{A}/\frm_{A}^{2}\to \frm_{B}/\frm_{B}^{2}$ is injective. There exists a surjective homomorphism $B\to R$ such that $A\to R$ is an injective, weakly unramified extension of DVRs.
\end{lemma}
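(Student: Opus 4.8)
The plan is to deduce the statement directly from Lemma~\ref{lem:precut}. First I would translate the hypothesis into concrete terms: fix a uniformizer $\pi$ of $A$ and let $b \in \frm_{B}$ be its image under $A \to B$. Since $\frm_{A}/\frm_{A}^{2}$ is a one-dimensional vector space spanned by the class of $\pi$, injectivity of $\frm_{A}/\frm_{A}^{2} \to \frm_{B}/\frm_{B}^{2}$ is equivalent to the class of $b$ being nonzero in $\frm_{B}/\frm_{B}^{2}$, i.e. to $b \in \frm_{B} \setminus \frm_{B}^{2}$. In particular $b \neq 0$ and $\frm_{B} \neq \frm_{B}^{2}$, so $\dim B \geq 1$; thus Lemma~\ref{lem:precut} applies to the pair $(B,b)$, and by its last assertion there is a surjective homomorphism $\psi\colon B \to R$ onto a DVR $R$ with $\psi(b) \in \frm_{R} \setminus \frm_{R}^{2}$, that is, with $\psi(b)$ a uniformizer of $R$.

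Next I would check that $\phi \eqdef \psi\circ(A\to B)\colon A \to R$ is the extension sought. As the preimage of $\frm_{R}$ under the surjection $\psi$ is a maximal ideal of $B$, it equals $\frm_{B}$, so $\psi$ is local, and hence so is $\phi$. By construction $\phi(\pi) = \psi(b)$ is a uniformizer of $R$; in particular $\phi(\pi) \neq 0$, so the kernel of $\phi$ is a prime ideal of the DVR $A$ other than $\frm_{A}$, hence zero, and $\phi$ is injective. Since a uniformizer of $A$ goes to a uniformizer of $R$, the injective extension $A \subseteq R$ is weakly unramified, and $\psi\colon B \to R$ is the desired surjection.

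I do not expect any genuine obstacle: the real content — producing a DVR quotient of $B$ in which a prescribed element of $\frm_{B}\setminus\frm_{B}^{2}$ stays a uniformizer — has already been isolated in Lemma~\ref{lem:precut}, and the remaining points (reformulating the cotangent-space hypothesis as $b\notin\frm_{B}^{2}$, the automatic locality of $\psi$, and the injectivity of $\phi$) are all immediate.
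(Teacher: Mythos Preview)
Your proposal is correct and follows exactly the approach the paper intends: the paper simply says the lemma is immediate from Lemma~\ref{lem:precut}, and you have spelled out precisely the routine verification (that the cotangent hypothesis puts $b\in\frm_{B}\setminus\frm_{B}^{2}$, and that the resulting $A\to R$ is local, injective, and weakly unramified).
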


The proof is immediate from Lemma~\ref{lem:precut}.
   
\begin{lemma}\label{cut2}
Let $U$ be an integral regular scheme, $V \subseteq U$ a nonempty open subset, $u \in U$. There exist a morphism $f\colon \spec R \arr U$, where $R$ is a DVR, which sends the closed point of\/ $\spec R$ to $u$, inducing an isomorphism $k(u) \simeq R/\frm_{R}$, and the generic point into $V$.
\end{lemma}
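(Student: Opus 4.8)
The plan is to reduce to the case where $U$ is affine and then to construct the DVR as a localization of a suitable quotient of the local ring $\cO_{U,u}$. First I would replace $U$ by an affine open neighborhood of $u$ meeting $V$ (such exists since $U$ is integral, hence irreducible, so $V$ is dense); this does not change $\cO_{U,u}$, and the condition ``generic point of $\spec R$ lands in $V$'' only needs the image of the generic point to avoid the closed set $U\setminus V$, which we can arrange locally. So assume $U = \spec B$ with $B$ a domain, and let $\frp\s B$ be the prime corresponding to $u$, so that $B_{\frp}=\cO_{U,u}$ is a regular local ring with residue field $k(u)$.

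The key step is to apply Lemma~\ref{lem:precut} to the regular local ring $B_{\frp}$. Let $b\in B$ be a nonzero element vanishing on the irreducible components of the closed complement $U\setminus V$ that pass through $u$ — more precisely, pick $b\in \frm_{B_{\frp}}$, $b\neq 0$, lying in every height-one prime of $B_{\frp}$ that is contained in the ideal defining $U\setminus V$ (there are finitely many such primes since $B_{\frp}$ is noetherian; if $u\in V$ already then $\dim B_{\frp}=0$ is impossible unless $U$ is a point, in which case $R$ is the function field — wait, $R$ must be a DVR, so if $\dim B_{\frp}=0$ the statement is about $u$ being the generic point and one takes $R$ a DVR of $k(U)$ centered anywhere in $V$; I would handle this degenerate case separately). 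In the main case $\dim B_{\frp}\ge 1$, Lemma~\ref{lem:precut} gives a surjection $\phi\colon B_{\frp}\arr R$ onto a DVR with $\phi(b)\neq 0$. The induced map $\spec R\arr \spec B_{\frp}\arr U$ sends the closed point of $\spec R$ to $u$ (since $\phi$ is local and surjective, it induces $k(u)\xrightarrow{\sim} R/\frm_R$), and it sends the generic point of $\spec R$ to the generic point $\eta$ of $\spec(B_{\frp}/\ker\phi)$; because $\phi(b)\neq 0$, $b\notin\ker\phi$, so $\eta$ does not lie on any component of $U\setminus V$ through $u$.

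The one thing that still needs care — and which I expect to be the main (minor) obstacle — is arguing that the generic point of $\spec R$ actually lands in $V$, not merely off the components of $U\setminus V$ through $u$. But $\spec R\arr U$ factors through $\spec \cO_{U,u}=\operatorname{Spec} B_\frp$, whose image consists only of generizations of $u$; a point of $U\setminus V$ that is a generization of $u$ lies on a component of $U\setminus V$ through $u$, and we have excluded exactly those. Hence the image of the generic point of $\spec R$ avoids $U\setminus V$ entirely, i.e.\ lands in $V$. Finally $R$ is a DVR by construction, and the isomorphism $k(u)\simeq R/\frm_R$ has been noted, so all assertions hold. (For the degenerate case $\dim\cO_{U,u}=0$, meaning $u=\eta$ is already the generic point of $U$: choose any closed point $v\in V$ and apply the construction above with $u$ replaced by $v$; the resulting $\spec R\arr U$ has generic point mapping to $\eta\in V$ as well, and one only loses the prescription of the residue field at $u$, which is vacuous here since we may instead take $R$ to be $\cO_{V,v}$ localized appropriately — in fact the case $\dim U = 0$ forces $U=\spec k(U)$, where $R$ must be a DVR with fraction field $k(U)$ and the statement is standard.)
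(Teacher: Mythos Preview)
Your approach is the same as the paper's: work in $\cO_{U,u}$, choose a nonzero element $b$ in the ideal of the complement $U\setminus V$, and apply Lemma~\ref{lem:precut}. The paper does exactly this in two lines (if $u\in V$ it is clear; otherwise take $b$ in the radical ideal $I\subseteq\cO_{U,u}$ of the complement of the inverse image of $V$ and apply Lemma~\ref{lem:precut}).

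Two small clean-ups. First, the phrase ``height-one prime of $B_{\frp}$ that is contained in the ideal defining $U\setminus V$'' is garbled: you want primes \emph{containing} that ideal, and there is no reason to restrict to height one, since components of $U\setminus V$ through $u$ can have any codimension. In fact you need not mention components at all: any nonzero $b$ in $I\cdot\cO_{U,u}$ works, because $\phi(b)\neq 0$ forces $I\not\subseteq\ker\phi$, so the point $\ker\phi$ is not in $V(I)$ and the generic point of $\spec R$ lands in $V$. Second, your degenerate case is tangled: $u\in V$ does not force $\dim\cO_{U,u}=0$, and when $u$ is the generic point you want a DVR with \emph{residue} field $k(U)$ (for instance $k(U)[\![t]\!]$), not fraction field $k(U)$.
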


\begin{proof}
If $u \in V$, this is clear. If not, call $I \subseteq \cO_{U,u}$ the radical ideal of the complement of the inverse image of $V$ in $\spec \cO_{U,u}$, take $b\in I \setminus \{0\}$, and apply Lemma~\ref{lem:precut}.

\end{proof}

\begin{lemma}\label{lem:inequality}
Let $A$ be a noetherian $1$-dimensional local domain with fraction field $K$, $R$ a DVR with quotient field $L$ and residue field $\ell$. Let $A \subseteq R$ be a local embedding, inducing embeddings $k \subseteq \ell$ and $K \subseteq L$. Then
   \[
   \trdeg_{k}\ell \leq \trdeg_{K}L\,.
   \]
\end{lemma}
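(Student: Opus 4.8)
The plan is to reduce the problem to the classical dimension inequality for local domains (the Cohen--Seidenberg / Abhyankar-type inequality), which states that if $A \subseteq R$ is a local embedding of noetherian local domains with $R$ of dimension $1$ and $A$ of dimension $1$, then $\trdeg_{k}\ell + \dim R \le \trdeg_{K}L + \dim A$, where $\dim$ here is Krull dimension. Since both $A$ and $R$ have dimension $1$ by hypothesis (a DVR has dimension $1$, and $A$ is assumed $1$-dimensional), the $\dim$ terms cancel and we get exactly $\trdeg_{k}\ell \le \trdeg_{K}L$. So the essential content is just to cite or prove the standard inequality in the form that applies here.

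First I would recall the precise statement I need: for a local extension $A \subseteq R$ of noetherian local domains, with $\frp = \frm_{A}$ (so $\frp = \frm_{R} \cap A$ since the embedding is local), one has
\[
\operatorname{ht}(\frm_{R}) + \trdeg_{\kappa(\frm_R)}\kappa(\frm_A \text{-something}) \le \dots
\]
— more cleanly, I would invoke the inequality in the form found in Matsumura or in EGA~IV (e.g. the dimension formula): if $R$ is a noetherian local domain containing a noetherian local domain $A$ with $\frm_{R} \cap A = \frm_{A}$, then
\[
\dim R + \trdeg_{k}\ell \le \dim A + \trdeg_{K}L .
\]
This is standard; it is essentially \cite[\S14, Theorem 14.I]{matsumura-commutative-algebra} or can be derived from the fact that a finitely generated field extension $L/K$ has $\trdeg_K L$ equal to the transcendence degree, combined with the behavior of dimension under polynomial ring passage. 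With $\dim R = \dim A = 1$ this gives the claim.

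If one prefers a self-contained argument rather than a citation, I would proceed as follows. Pick a uniformizer $\pi$ of $R$; since $A \subseteq R$ is local, $\pi$ is a nonzerodivisor and $A/(\pi \cap A)$-considerations show $\frm_A \ne 0$. Choose $a \in \frm_A \setminus \{0\}$; then $a = \pi^{m} v$ for a unit $v \in R^{\times}$ and some $m \ge 1$. The quotient $R/aR$ is an artinian local ring whose residue field is $\ell$, and $A/aA$ is a $0$-dimensional (artinian) local ring with residue field $k$. Now apply the dimension inequality to the artinian situation, or better: localize and complete. Passing to completions $\widehat{A} \subseteq$ (a suitable local ring over) $\widehat{R}$ preserves the relevant transcendence degrees of residue fields and reduces us to the complete case, where $\widehat{R}$ is a complete DVR and one can use Cohen structure theory to write down the residue field extension explicitly and bound its transcendence degree by that of the generic fibers. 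The cleanest route, though, is simply to cite the general dimension formula, since the hypotheses ($R$ DVR, $A$ one-dimensional noetherian local, local embedding) are exactly those under which it applies with equality of the Krull dimensions.

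The main obstacle — really the only subtlety — is that the general dimension inequality $\dim R + \trdeg_{k}\ell \le \dim A + \trdeg_{K}L$ requires $A$ (and $R$) to be \emph{noetherian}, which is why the hypothesis that $A$ is a noetherian $1$-dimensional local domain matters; without noetherianity the inequality can fail. A secondary point to check is that the embedding being \emph{local} (so $\frm_{R} \cap A = \frm_{A}$, which forces $\frm_A \ne 0$ and the residue field embedding $k \into \ell$ to be well-defined) is what makes the dimension formula applicable — one should not forget to note that $\frm_A \cap A = \frm_A$ is automatic from ``local embedding'' and that $A$ being one-dimensional rules out the degenerate case $\frm_A = 0$. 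Beyond that, the result is immediate.
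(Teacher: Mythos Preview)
Your idea of invoking a dimension inequality is the right shape, and once the correct version is in hand it gives the lemma in one line. The citation, however, is off: the dimension inequality in Matsumura (Theorem~15.5 in \emph{Commutative Ring Theory}, or 14.I in the older book) and the corresponding statement in EGA~IV require $R$ to be finitely generated (or essentially of finite type) over $A$, which is not assumed here --- $R$ is an arbitrary DVR dominating $A$. So the sentence ``This is standard; it is essentially Matsumura 14.I'' is not justified as written.

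The gap is easy to fill. Pick elements $u_{1},\dots,u_{n}\in R$ whose images in $\ell$ are algebraically independent over $k$, and let $B$ be the localization of $A[u_{1},\dots,u_{n}]$ at the contraction of $\frm_{R}$. Then $A\subseteq B$ is a local extension with $B$ essentially of finite type over $A$, so the standard inequality does apply and gives $\dim B + n \le \dim A + \trdeg_{K}\operatorname{Frac}(B)$. Since $\frm_{A}\neq 0$ injects into $\frm_{B}$ we have $\dim B \ge 1$, and $\operatorname{Frac}(B)\subseteq L$, so $n \le \trdeg_{K}L$; as $n$ was arbitrary this yields $\trdeg_{k}\ell \le \trdeg_{K}L$. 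Alternatively, since $R$ is a valuation ring you may quote Abhyankar's inequality directly, which does not need finite type. Your ``self-contained'' sketch via artinian quotients and completions, on the other hand, does not arrive anywhere.

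For comparison, the paper argues differently: it normalizes $A$ inside $K$ (invoking Krull--Akizuki to guarantee the normalization is Dedekind), localizes at $\frm_{R}\cap\overline{A}$ to obtain a DVR $A'$ with residue field algebraic over $k$, and thereby reduces to the case where $A$ itself is a DVR, which is handled by a lemma in an earlier paper of the authors. Your route, once patched as above, is more self-contained and arguably shorter.
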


\begin{proof}
If $A$ is a DVR, this is \cite[Lemma 2.1]{brosnan-reichstein-vistoli-mixed-char} (there it is assumed that $A \subseteq R$ is weakly unramified, but this is not in fact used in the proof). In the general case, consider the normalization $\overline{A}$ of $A$ in $K$, and its localization $A'$ at the maximal ideal $\frm_{R}\cap \overline{A}$. By the Krull--Akizuki theorem $\overline{A}$ is a Dedekind domain, so $A'$ is a DVR.

Call $k'$ the residue field of $A'$; we have a factorization $k\subseteq k' \subseteq \ell$, and $k'$ is algebraic over $k$, so that $\trdeg_{k}\ell = \trdeg_{k'}\ell$, so the general case reduces to the case of an extension of DVRs.
\end{proof}

\begin{lemma}\label{lem:inequality2}
Let $k$ be a field, $R$ a DVR containing $k$, with residue field $\ell$ and fraction field $K$. Assume $\trdeg_{k}\ell < +\infty$. Then
   \[
   \trdeg_{k}\ell < \trdeg_{k}K\,.
   \]
\end{lemma}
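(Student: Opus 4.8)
The goal is the strict inequality $\trdeg_{k}\ell < \trdeg_{k}K$ for a DVR $R$ containing a field $k$, with residue field $\ell$ and fraction field $K$, under the assumption that $\trdeg_k \ell$ is finite. The plan is to first establish the weak inequality $\trdeg_{k}\ell \le \trdeg_{k}K$ — this is essentially the content of Lemma~\ref{lem:inequality} applied to the tautological local embedding $R \subseteq R$, or can be seen directly by choosing elements of $R$ lifting a transcendence basis of $\ell$ over $k$ and checking they remain algebraically independent in $K$. So the real work is to show the inequality is strict, i.e. to produce one more element of $K$ that is transcendental over the field generated by such lifts.

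\textbf{Step 1: Reduce to the key claim.} Let $n = \trdeg_k \ell$, which is finite by hypothesis. Pick $a_1, \dots, a_n \in R$ whose images $\bar a_1, \dots, \bar a_n \in \ell$ form a transcendence basis of $\ell$ over $k$. The reduction mod $\frm_R$ argument shows $a_1, \dots, a_n$ are algebraically independent over $k$. It then suffices to exhibit a single element $t \in K$ which is transcendental over $k(a_1, \dots, a_n)$; combined with the weak inequality (which gives $\trdeg_k K \ge n$, realized by the $a_i$), this forces $\trdeg_k K \ge n+1 > n = \trdeg_k \ell$.

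\textbf{Step 2: The key claim — a uniformizer is transcendental over the lifted subfield.} Let $\pi \in \frm_R$ be a uniformizer. I claim $\pi$ is transcendental over $k(a_1,\dots,a_n)$. Suppose not: then $\pi$ satisfies a polynomial relation $\sum_{i} c_i(a_1,\dots,a_n)\, \pi^i = 0$ with coefficients $c_i$ in the polynomial ring over $k$, not all zero, and we may assume $c_0 \ne 0$ (clear out the lowest power of $\pi$, using that $\pi \ne 0$ and $R$ is a domain). Dividing by the highest common power of $\pi$ dividing all terms, reduce to the case where at least one $c_i(a_1,\dots,a_n)$ is a unit in $R$ — equivalently has nonzero image in $\ell$ — and in particular the constant term $c_0(a_1,\dots,a_n)$, if it survives, is a unit. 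Now reduce the relation modulo $\frm_R$: every term with $i \ge 1$ vanishes, so we get $\overline{c_0(a_1,\dots,a_n)} = c_0(\bar a_1, \dots, \bar a_n) = 0$ in $\ell$. Since $\bar a_1, \dots, \bar a_n$ are algebraically independent over $k$, this forces $c_0 = 0$ as a polynomial, contradicting our normalization. The only subtlety is bookkeeping the normalization so that after clearing powers of $\pi$ the surviving lowest-order coefficient is genuinely a unit; this is standard valuation-theoretic manipulation (compare the $\pi$-adic valuations of the monomials $c_i(a) \pi^i$ and take a term of minimal valuation).

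\textbf{Step 3: Conclude.} By Steps 1 and 2, $a_1, \dots, a_n, \pi$ are algebraically independent over $k$ in $K$, so $\trdeg_k K \ge n + 1 > n = \trdeg_k \ell$. I expect the main (and only mild) obstacle to be the clean bookkeeping in Step 2 ensuring a unit coefficient survives the reduction mod $\frm_R$; everything else is routine. An alternative phrasing of Step 2 avoids explicit clearing of powers: extend the valuation $v$ of $R$ to $K$, note $v(\pi) = 1$ while $v$ restricted to $k(a_1,\dots,a_n)^\times$ is trivial (since each $a_i$ and hence each nonzero $c_i(a)$ is a unit), and observe that no nonzero polynomial relation over $k(a_1,\dots,a_n)$ can hold because the distinct powers of $\pi$ have distinct valuations — so a nontrivial sum cannot be zero. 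This is perhaps the slickest route and I would present it this way.
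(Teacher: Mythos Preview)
Your proposal is correct and follows exactly the paper's approach: lift a transcendence basis $u_1,\dots,u_n$ of $\ell$ over $k$ to $R$ and show that together with a uniformizer $\pi$ they are algebraically independent over $k$. The paper simply declares this last step ``immediate'' without further argument, whereas you have spelled out the details; your valuation-theoretic phrasing at the end of Step~2 (each nonzero $c_i(a_1,\dots,a_n)$ is a unit since $c_i(\bar a_1,\dots,\bar a_n)\neq 0$, so the terms $c_i(a)\pi^i$ have pairwise distinct valuations) is the cleanest way to present it.
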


\begin{proof}
Suppose $u_{1}$, \dots,~$u_{n}$ are elements of $R$ whose images in $\ell$ are algebraically independent over $k$. If $\pi$ is a uniformizing parameter for $R$, it is immediate to show that $\pi$, $u_{1}$, \dots,~$u_{n}$ are algebraically independent over $k$.
\end{proof}

\subsection*{The proofs}

Let us proceed with the proof of the theorems. For all of them, the first step is to reduce to the case in which $X$ is tame. The reduction is done almost word by word as in the beginning of the proof of \cite[Theorem~6.1]{brosnan-reichstein-vistoli3}. Let $\ell$ be a field, $\xi\colon \spec \ell \arr X$ an object of $X(\ell)$. By a result due essentially to Keel and Mori (\cite[Lemma~6.4]{brosnan-reichstein-vistoli3}) there exists a representable étale morphism $Y \arr X$, where $Y$ is a stack with finite inertia, and a lifting $\eta\colon \spec \ell \arr Y$ of $\xi$. The fact that the morphism $Y \arr X$ is representable implies that the induced homomorphism $\underaut_{\ell}\eta \arr\underaut_{\ell}\xi$ is injective, which implies that $\underaut_{\ell}\eta$ is linearly reductive. By \cite[Proposition~3.6]{dan-olsson-vistoli1} there exists a tame open substack $Y'$ of $Y$ containing $\eta$. The rest of the argument is identical to that in the proof of \cite[Theorem~6.1]{brosnan-reichstein-vistoli3}; so we can assume that $X$ is tame. The theorems are easy consequences of the following.

Let $X \arr S$ be a dominant morphism locally of finite type, where $X$ is a regular integral tame stack, $S$ an integral locally noetherian scheme.
%
%
%
%
%

\begin{lemma}\label{lem:DVR}
Let $\ell$ be a field, $\xi\colon \spec \ell \arr X$ a morphism; call $s \in S$ the image of the composite $\spec \ell \xarr{\xi} X \arr S$. Then $\ell$\/ is an extension of $k(s)$, and we can think of $\xi$ as a morphism $\spec\ell \arr X_{k(s)}$. Assume that $s$ is not the generic point of $S$.
	
\begin{enumerate1} 
\item If $S$ is locally of finite type over a field $k$, there exists a generalization $s'\neq s$ of $s$ such that

		\[
		\ed_{k}\xi \leq \ed_{k}X_{k(s')}\,.
		\]
	
	\item If $S$ has dimension $1$, then
	   \[
	   \ed_{k(s)}\xi \leq  \ed_{k(S)}X_{k(S)}+1\,.
	   \]
	
	\item If $X \arr S$ is smooth, there exists a generalization $s'\neq s$ of $s$ such that
	   \[
	   \ed_{k(s)}\xi \leq\ed_{k(s')}X_{k(s')}\,.
	   \]
\end{enumerate1}
\end{lemma}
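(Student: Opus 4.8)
The plan is to use Theorem~\ref{thm:valuative}, our new version of the valuative criterion, to reduce all three cases to finding an appropriate trace of a DVR inside $X$. Let me sketch the common setup first. Given $\xi\colon \spec\ell \arr X$ with image $s\in S$, pick a field of definition $\ell_{0}$ of $\xi$ over the appropriate base field that realizes the essential dimension, so $\xi$ descends to some $\xi_{0}\colon \spec\ell_{0}\arr X$ with $\trdeg$ equal to $\ed\xi$. The idea in each case is: produce a trait $\spec R$ (i.e.\ $R$ a DVR) mapping to $X$, whose generic point factors through $\xi_{0}$ (or a finite extension thereof) and whose closed point maps into $X_{k(s')}$ for a suitable generalization $s'$ of $s$. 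Then $\ed\xi = \trdeg$ of the generic fiber can be bounded, via Lemma~\ref{lem:inequality} and Lemma~\ref{lem:inequality2}, by an essential dimension computed at the closed point, because the object carried by the closed point of $\spec R$ is defined over the residue field of $R$.

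Here is the construction of the trait. Since $X$ is regular integral and tame, and $\xi_{0}$ is an object over a field, we may regard $\spec\ell_{0}$ as mapping to $X$; choose a regular local ring $B$ on $X$ (a chart, or rather a point of an atlas lying over the generic point of $X$ in the appropriate sense) dominating the local ring at the image of $\xi_{0}$, together with the local ring $A$ at $s$ on $S$. In case~(2), $A$ is a $1$-dimensional local domain; in cases (1) and (3), after choosing $s'$ we want $A$ to be a DVR, namely the local ring of $S$ at a codimension-$1$ generalization that still specializes to $s$. The key technical device is Lemma~\ref{cut} (resp.\ Lemma~\ref{cut2}): it produces a surjection $B\arr R$ onto a DVR $R$ such that $A\arr R$ is an injective weakly unramified extension of DVRs, provided $\frm_{A}/\frm_{A}^{2}\arr\frm_{B}/\frm_{B}^{2}$ is injective — which holds because $X\arr S$ is dominant and, in case (3), smooth, so that a regular system of parameters on $S$ extends to one on $X$. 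The surjection $B\arr R$ gives a section, hence a morphism $\spec R\arr X$ whose generic point hits $\xi_{0}$ and whose closed point lies over $s'$ (resp.\ $s$). Now apply Theorem~\ref{thm:valuative} to the generic-fiber datum to get, after possibly a root-stack modification $\radice[n]{\spec R}\arr X$, a representable lift; by Lemma~\ref{loopram} the weak unramifiedness lets us descend back so that $n=1$ after a harmless base change, and we genuinely have $\spec R\arr X$.

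With the trait in hand the inequalities fall out. The closed point of $\spec R$ carries an object of $X$ over the residue field $R/\frm_{R}$, which in case (3) equals $k(s')$, giving $\ed_{k(s')}\xi'\le\ed_{k(s')}X_{k(s')}$ for the specialized object $\xi'$; and semicontinuity of essential dimension along the trait — the generic object degenerates to the special one — gives $\ed_{k(s)}\xi\le\ed_{k(s')}\xi'$, yielding (3). Case (1) is the same, using $k$ as base field and $\trdeg_{k}$, with $s'$ a codimension-one generalization. Case (2) is where the $+1$ appears: here $A$ is only a $1$-dimensional local domain, not a DVR, so after the construction we land on a DVR $R$ with $A\subseteq R$ a local embedding; Lemma~\ref{lem:inequality} gives $\trdeg_{k(s)}\ell'\le\trdeg_{k(S)}L$ where $\ell'$ is the residue field and $L$ the fraction field of $R$, and then Lemma~\ref{lem:inequality2} contributes the extra $+1$ when relating $\trdeg$ over $k(S)$ of $L$ to the essential dimension of the generic gerbe — or rather, the specialization costs one dimension because the closed point of $S$ is one step below the generic point.

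The main obstacle I expect is the bookkeeping around the root stack in Theorem~\ref{thm:valuative}: the lift one gets a priori is $\radice[n]{\spec R}\arr X$, not $\spec R\arr X$, and one must argue that the object carried by the closed point — which lives over $\cB_{k}\mmu_{n}$, not over $\spec k$ — still has the essential dimension one wants, or else pass to a ramified cover of $R$ to kill $n$ (legitimate, since ramified base change does not change residue fields and only helps the $\trdeg$ inequalities). Controlling that the root-stack point does not increase essential dimension, and checking that the weak unramifiedness hypothesis of Lemma~\ref{loopram} is actually available in each of the three cases (it is, because the parameters can be chosen compatibly thanks to Lemma~\ref{cut}), is the delicate part; everything else is a routine assembly of the commutative-algebra lemmas already proved.
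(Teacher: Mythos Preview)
Your proposal assembles the right ingredients but runs them in the wrong direction, and the key step of the argument is missing.

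First, the orientation of the trait is impossible. You want $\spec R \to X$ with generic point factoring through $\xi_{0}$ (hence lying over $s$) and closed point over a generalization $s' \neq s$. But any morphism $\spec R \to S$ sends the closed point to a \emph{specialization} of the image of the generic point; so if the generic point lands over $s$, the closed point cannot land over a strict generalization of $s$. The paper's construction goes the other way: one picks a smooth atlas $U \to X$ and a point $u \in U$ lifting $\xi$, and uses Lemma~\ref{cut2} (or Lemma~\ref{cut} in case~(3)) to produce $\spec R \to U$ with \emph{closed} point at $u$ and \emph{generic} point lying over some $s' \neq s$.

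Second, and more importantly, descending $\xi$ to a minimal $\ell_{0}$ at the outset does not help; what one must descend is the object at the \emph{generic} point of $R$. Letting $K$ be the fraction field of $R$, the composite $\spec K \to X$ lies over $s'$, so there is an intermediate field $k(s') \subseteq K_{1} \subseteq K$ with $\trdeg_{k(s')} K_{1} \le \ed_{k(s')} X_{k(s')}$ and a descent $\spec K_{1} \to X$. Now set $R_{1} \eqdef R \cap K_{1}$, a DVR with residue field $\ell_{1}$, and apply Theorem~\ref{thm:valuative} to $R_{1}$ (not to $R$, which already maps to $X$) to get a representable $\radice[n]{\spec R_{1}} \to X$. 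The point is that $\xi$ then factors as
\[
\spec \ell \longrightarrow \cB_{\ell_{1}}\mmu_{n} \longrightarrow X,
\]
so $\ed_{k(s)}\xi \le \trdeg_{k(s)}\ell_{1} + 1$, the $+1$ coming from $\ed_{\ell_{1}}\cB_{\ell_{1}}\mmu_{n} \le 1$. This factorization is the substance of the argument; your appeal to ``semicontinuity of essential dimension along the trait'' is circular, since that is precisely what the lemma is establishing.

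Finally, the three cases separate according to how one handles the $+1$. In case~(2) it stays, and Lemma~\ref{lem:inequality} applied to $\cO_{S,s} \subseteq R_{1}$ gives $\trdeg_{k(s)}\ell_{1} \le \trdeg_{k(S)} K_{1}$. In case~(1) the $+1$ is absorbed by the \emph{strict} inequality of Lemma~\ref{lem:inequality2} (applied to $k \subseteq R_{1}$), together with the bookkeeping $\ed_{k}\xi = \ed_{k(s)}\xi + \trdeg_{k}k(s)$. In case~(3) one arranges that $\cO_{\overline{\{s'\}},s} \subseteq R$ is weakly unramified; since $\cO_{\overline{\{s'\}},s} \subseteq R_{1} \subseteq R$, the extension $R_{1} \subseteq R$ is also weakly unramified, and Lemma~\ref{loopram} then forces $n = 1$, so the gerbe disappears and no $+1$ arises.
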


\begin{proof}
By \cite[Théorème~6.3]{laumon-moret-bailly} there exists a regular scheme $U$ and a smooth morphism $U\to X$ with a lifting $\spec k\to U$ of $\xi$; if we call $u \in U$ the image of $\spec \ell \arr U$, we can replace $\ell$ with $k(u)$, and assume $\ell = k(u)$. 

In cases (1) and (2), we call $V$ the inverse image of $S \setminus \overline{\{s\}}$ in $U$, and we construct a morphism $\spec R \arr U$ as in Lemma~\ref{cut2} such that the composition $\spec R\to S$ maps the generic point to a generalization $s'\neq s$ of $s$.

In case (3) $X \arr S$ is smooth, so $U \arr S$ is also smooth, and $S$ is regular; in this case we start by choosing a point $s'$ in $S$ such that $s \in \overline{\{s'\}}$, and $\cO_{\overline{\{s'\}},s}$ is a DVR. Then we apply Lemma~\ref{cut} to the embedding $\cO_{\overline{\{s'\}},s} \subseteq \cO_{U',u}$, where $U'$ is the inverse image of $\overline{\{s'\}}$ in $U$, and obtain a morphism $\spec R\to U$ such that the composition $\spec R\to S$ maps the generic point to $s'$ and $\cO_{\overline{\{s'\}},s}\s R$ is weakly unramified.

Let $M$ be the moduli space of $X$, with the resulting factorization $X \arr M \arr S$. Let $K$ be the fraction field of $R$; there exists an intermediate extension $k(s')\s K_{1}\s K$ with
   \[
   \trdeg_{k(s')}K_{1} \le \ed_{k(s')}X_{k(s')}
   \]
and a factorization $\spec K \to \spec K_{1} \to X$. Write $R_{1} = R\cap K_{1}$; clearly $K_{1} \not\subseteq R$, so that $R_{1}$ is a DVR; call $\ell_{1}\s \ell$ its residue field. The composite $\spec R \arr M$ factors through $\spec R_{1}$, and we get a commutative diagram
   \[
   \begin{tikzcd}
   \spec K\rar\dar[hook] & \spec K_{1}\rar & X\dar\ar[dr]\\
   \spec R\ar[urr]\rar       & \spec R_{1}\ar[from=u, hook, crossing over] \rar    & M \rar & S   
   \end{tikzcd}
   \]
By Theorem~\ref{thm:valuative}, since $X\to M$ is proper then there exists an integer $n$ and a representable extension
   \[
   \radice[n]{\spec R_{1}}\to X
   \]
of the morphism $\spec K_{1} \arr X$. Since $X \arr S$ is separated, the diagram
   \[\begin{tikzcd}
   \radice[n]{\spec R}\rar\dar & \radice[n]{\spec R_{1}}\rar  & X\dar\ar[dr]\\
   \spec R\ar[urr]\rar   & \spec R_{1}\ar[from=u, crossing over] \rar    & M \rar &S
   \end{tikzcd}\]
   commutes. If we chose a lifting $\spec \ell \arr \radice[n]{\spec R}$ of the embedding $\spec \ell \subseteq \spec R$ we obtain a factorization
	\[\spec \ell \to \left(\spec \ell_{1}\times_{\spec R_{1}}\radice[n]{\spec R}\right)_{\rm red}= \cB_{\ell_{1}}\mmu_{n} \to X \]
	of $\xi$. Since the essential dimension of $\cB_{\ell_{1}}\mmu_{n}$ over $\ell_{1}$ is at most $1$, we get that
	\[\ed_{k(s)}\xi \leq \trdeg_{k(s)}\ell_{1} +1\,.\]
	If $S$ is locally of finite type over a field $k$, then by Lemma~\ref{lem:inequality2} applied to $R_{1}/k$ we obtain
   \begin{align*}
   \trdeg_{k(s)}\ell_{1}+1 &\leq \trdeg_{k(s')}K_{1} + 1\\
   &\leq \trdeg_{k(s')}K_{1}+\trdeg_{k}k(s')-\trdeg_{k}k(s)\,;
   \end{align*}
Since $\ed_{k}\xi = \ed_{k(s)}\xi + \trdeg_{k}k(s)$ and $\ed_{k}X_{k(s')} = \ed_{k(s')}X_{k(s')} + \trdeg_{k}k(s')$, we obtain the thesis.

	If $S$ has dimension $1$, then $s'$ is the generic point and by Lemma~\ref{lem:inequality} applied to the embedding $\cO_{S,s} \subseteq R_{1}$ we obtain the desired inequality
	\[\trdeg_{k(s)}\ell_{1}+1 \leq \trdeg_{k(S)}K_{1}+1 \leq \ed_{k(S)}X_{k(S)}+1\,.\]

If $X \arr S$ is smooth, then $\cO_{\overline{\{s'\}},s}$ is a DVR and $R$ is weakly unramified over it, and since $\cO_{\overline{\{s'\}},s}\s R_{1}\s R$ then $R$ is weakly unramified over $R_{1}$, too. We can thus assume that $n = 1$, by Lemma~\ref{loopram}, so that $\cB_{\ell_{1}}\mmu_{n} = \spec \ell_{1}$, and by Lemma~\ref{lem:inequality} applied to the embedding $\cO_{\overline{\{s'\}},s}\subseteq R_{1}$ we get
   \[
   \ed_{k(s)}\xi \leq \trdeg_{k(s)}\ell_{1}\leq  \trdeg_{k(s')}K_{1} \leq \ed_{k(s')}X_{k(s')}
   \]
as claimed.
\end{proof}

\subsubsection*{The proof of Theorem~\ref{thm:genericity1}} We apply the above to the case in which $S = M$ is the moduli space of $X$. Let $\xi\colon \spec \ell \arr X$ be a morphism; we need to show that $\ed_{k}\xi \leq\ged_{k}X$. Call $s \in M$ the image of $\xi$; if $s$ is the generic point of $M$, the result follows immediately.

If not, by induction on the codimension of $\overline{\{s\}}$ in $M$ we may assume that the inequality holds for all morphisms $\spec \ell' \arr X$, such that the codimension of the closure of the image $s' \in M$, which equals $\dim M - \trdeg_{k}k(s')$, is less than the codimension of $\overline{\{s\}}$. In particular, this holds for morphisms as above, such that $s'$ is a generalization of $s$ different from $s$.

From Lemma~\ref{lem:DVR} we get a generalization $s'\neq s$ of $s$ and an inequality
   \[
   \ed_{k}\xi \leq \ed_{k}X_{k(s')}\,.
   \]
   By inductive hypothesis, $\ed_{k}X_{k(s')} \leq \ged_{k}X$, hence we conclude.

\subsubsection*{The proof of Theorem~\ref{thm:genericity2}} 
      
By Theorem~\ref{thm:genericity1} applied to $X_{k(S)} \arr \spec k(S)$, it enough to prove that $\ed_{k(s)}X_{k(s)} \leq\ed_{k(S)}X_{k(S)}$ for any $s \in S$. Once again we proceed by induction on the codimension of $\overline{\{s\}}$ in $S$, the case of codimension~$0$ being obvious. If this is positive, given a morphism $\spec\ell\arr X_{k(s)}$, Lemma~\ref{lem:DVR} gives us a generalization $s'$ of $s$ with $\ed_{k(s)}\xi \leq \ed_{k(s')}X_{k(s')}$, and the theorem follows.

\subsubsection*{The proof of Theorem~\ref{thm:genericity3}}

By Theorem~\ref{thm:genericity1} applied to $X_{K} \arr \spec K$, it is enough to prove the inequality $\ed_{k}X_{k} \leq \ed_{K}X_{K}+1$, which follows immediately from Lemma~\ref{lem:DVR}.

\section{Acknowledgments}

We are grateful to the referees for the detailed readings and the constructive comments.

\bibliographystyle{amsalpha}
\bibliography{Genericity}

\end{document}